\documentclass[11pt,a4paper]{article}

\RequirePackage[numbers]{natbib}

\RequirePackage[colorlinks,citecolor=blue,urlcolor=blue]{hyperref}

\usepackage{fullpage,lmodern,bm}
\usepackage[T1]{fontenc}
\usepackage[utf8]{inputenc}                        
\usepackage[Lenny]{fncychap}
\usepackage[usenames,dvipsnames]{xcolor}
\usepackage{amsmath,amssymb,amsthm,thmtools,dsfont,enumerate}
\usepackage{tikz,epigraph,lipsum}
\usetikzlibrary{shapes,arrows}
\usepackage{rotating,lipsum,pifont,cases}
\usepackage{appendix}	
\usepackage{cases}
\usepackage{pdfsync}
\usetikzlibrary{patterns,intersections}

\definecolor{blue0}{RGB}{0,77,153} 
\definecolor{red0}{RGB}{179,0,77} 
\definecolor{green0}{RGB}{134,219,76} 
\definecolor{gray0}{RGB}{84,97,110}

\numberwithin{equation}{section}

\newtheorem{theorem}{Theorem}[section]

\newtheorem{lemma}[theorem]{Lemma}

\newtheorem{remark}[theorem]{Remark}
\newtheorem{example}[theorem]{Example}

\DeclareMathOperator{\supp}{supp}

\newcommand{\C}{{\mathbb C}}

\newcommand{\id}{{\rm id}}

\numberwithin{equation}{section}
\setlength\parindent{0pt} 

\def\vs#1{\vspace{#1mm}}

\def\be{\begin{align}}
\def\ee{\end{align}}
\def\b*{\begin{eqnarray*}}
\def\e*{\end{eqnarray*}}


\def\be{\begin{eqnarray}}
\def\ee{\end{eqnarray}}
\def\beq{\begin{equation}}
\def\eeq{\end{equation}}
\def\b*{\begin{eqnarray*}}
\def\e*{\end{eqnarray*}}
\def\bi{\begin{itemize}}
\def\ei{\end{itemize}}


\def \1{{\bf 1}}


\def\={\;=\;}


 \def\vs#1{\vspace{#1mm}}





\def \E{\mathbb{E}}
\def \F{\mathbb{F}}

\def \P{\mathbb{P}}

\def \R{\mathbb{R}}

\def\Cc{{\cal C}}

\def\Fc{{\cal F}}
\def\Gc{{\cal G}}
\def\Hc{{\cal H}}

\newcommand{\Mid}{{\ \Big|\ }}

\usetikzlibrary{matrix}

\title{Markovian structure of the Volterra Heston model\thanks{We would like to thank Bruno Bouchard and Mathieu Rosenbaum  for very fruitful discussions and comments.}}
\date{\today}

\author{Eduardo Abi Jaber \thanks{Universit\'e Paris-Dauphine, PSL Research University, CNRS, UMR [7534], CEREMADE, 75016 Paris, France and AXA Investment Managers,  Multi Asset Client Solutions, Quantitative Research, 6 place de la Pyramide, 92908 Paris - La D\'efense, France, abijaber@ceremade.dauphine.fr.}
\and Omar El Euch\thanks{CMAP, Ecole Polytechnique, Palaiseau, France, omar.el-euch@polytechnique.edu}}



\begin{document}

	\maketitle

	\begin{abstract}

		We characterize the  {Markovian and affine structure} of the {Volterra Heston model} in terms of an infinite-dimensional adjusted forward process and specify its state space. More precisely, we show that it satisfies a {stochastic partial differential equation}  and displays an exponentially-affine characteristic functional. As an application, we deduce an existence and uniqueness result for a Banach-space valued  square-root process and provide its state space. This leads to another   representation of the Volterra Heston model together with its Fourier-Laplace transform in terms of this possibly infinite system of affine diffusions.  \vs1

		\noindent  {\textit{Keywords:}} Affine Volterra processes, stochastic Volterra equations, Markovian representation, stochastic invariance, Riccati-Volterra equations, rough volatility.  \vs1 
		
		\noindent  {\textit{MSC2010 Classification:}} 60H20, 45D05, 91G99. 
		
	\end{abstract}



\section{Introduction}
{The {Volterra Heston model}   is defined by the following dynamics}
\begin{align}
dS_t &= S_t \sqrt{V_t} dB_t, \quad S_0>0, \label{E:HestonlogSg}\\
V_t  &= g_0(t) + \int_0^t K(t-s) \left( - \lambda V_s ds + \nu \sqrt{V_s}dW_s \right) \label{E:HestonVg},
\end{align}
with $K \in L^2_{\rm loc}(\R_+,\R)$, $g_0: \R_+ \to \R$, $\lambda,\nu \in\R_+ $ and  $B = \rho W + \sqrt{1-\rho^2} W^\perp $ such that $(W, W^\perp)$ is a two-dimensional Brownian motion and $\rho \in [-1,1]$. It has been introduced in \cite{ALP17} for the purpose of financial modeling   following the literature on so-called rough volatility models \cite{gatheral2014volatility}. Hence $S_t$ typically represents a stock price at time $t$ with instantaneous stochastic variance $V_t$.  \vs1


This model nests as special cases the Heston model for $K\equiv 1$, and the rough Heston model of \cite{euch2016characteristic}, obtained by setting  $K(t)=\frac{t^{\alpha-1} }{\Gamma(\alpha)}$  for $\alpha \in (\frac 12, 1)$ and 
\begin{equation} \label{particular_g}
g_0(t) = V_0 + \int_0^{t} K(s) \lambda \theta ds,       \quad t\geq 0, \quad   \mbox{for some }V_0,  \theta \geq 0, 
\end{equation}
so that the only {model} parameters are $V_0,\theta,\lambda, \rho, \nu,\alpha$. Recall that the rough Heston model does not only fit remarkably well historical and implied volatilities of the market, but also enjoys a semi-closed formula for the characteristic function of the log-price in terms of {a} solution of a deterministic {Riccati-Volterra integral equation}. \vs1

In \cite{euch2017perfect}, the authors highlight the crucial role of  \eqref{particular_g}  in the design of hedging strategies for the rough Heston model. Here  
 we consider more general input curves $g_0$. Our motivation is twofold. In practice, the function $g_0$ is intimately linked to the forward variance curve $(\E[V_t])_{t \geq 0}$. More precisely, taking the expectation in \eqref{E:HestonVg} leads to the following relation
		$$ \E[V_t] + \lambda \int_0^t K(t-s) \E[V_s]ds =  g_0(t)  , \quad t \geq 0. $$  
	Thus, allowing for more general input curves  $g_0$ leads to more consistency with the market forward variance curve.  From a mathematical perspective,   this enables us to understand the general picture behind the  Markovian and affine nature of the  {Volterra Heston model \eqref{E:HestonlogSg}-\eqref{E:HestonVg}.\vs1

More precisely, adapting the methods of \cite{ALP17}, we provide a set of {admissible input curves} $\Gc_K$ defined in \eqref{E:DomainG} such that \eqref{E:HestonlogSg}-\eqref{E:HestonVg} admits a unique  $\R_+^2$-valued weak solution for any $g_0 \in \Gc_K$. In particular, we show that the Fourier-Laplace transform of $(\log S, V)$ is exponentially affine in $(\log S_0, g_0)$. 
Then we prove that, conditional on $\Fc_t$, the shifted {Volterra Heston model} $(S_{t+\cdot},V_{t+\cdot})$ still  has the same dynamics as in \eqref{E:HestonlogSg}-\eqref{E:HestonVg} {provided that} $g_0$ is replaced by  the following adjusted forward process
\begin{align}\label{E:gt0}
g_{t}(x) =\E\left[V_{t+x} + \lambda \int_0^x K(x-s)V_{t+s}ds \Mid \Fc_t\right], \quad x\geq 0 .
\end{align}  
 This leads to our main result which states that $\Gc_K$ is {stochastically invariant} with respect to the family $(g_t)_{t\geq 0}$. In other words, if we start from an  	{initial admissible input curve} $g_0 \in \Gc_K$, then $g_t$ belongs to $\Gc_K$, for all $t \geq 0$, see Theorem  \ref{T:condLaw0}. This in turn enables us to characterize the Markovian structure of $(S,V)$ in terms of the stock price and the adjusted forward process $(g_t)_{t \geq 0}$. Furthermore,  $(g_t)_{t\geq 0}$  can be realized as  the unique $\Gc_K$-valued  mild solution of the following {stochastic partial differential equation} of Heath–Jarrow–Morton-type
 \begin{align*}
 dg_t(x)  = \left( \frac{d}{dx} g_t(x)  - \lambda K(x) g_t(0) \right) dt +   K(x) \nu \sqrt{g_t(0)} dW_t,  \quad g_0 \in \Gc_K,
 \end{align*}
{and displays an affine characteristic functional.} \vs1
 
As an application, we establish the existence and uniqueness of a {Banach-space valued square-root process} and provide its state space.  This leads to another representation of $(V_t,g_t)_{t \geq 0}$. Moreover, the Fourier-Laplace transform of $(\log S, V)$ is shown to be an exponential affine functional of this process. These results are  in the spirit of the Markovian representation of fractional Brownian motion, see \cite{CC98,HS15}.\vs1

The paper is organized as follows. 	In Section \ref{S:extension}, we prove weak existence and uniqueness for the {Volterra Heston model} and provide its Fourier-Laplace transform.  Section  \ref{S:markovian} characterizes the Markovian structure in terms of the adjusted forward variance process.  Section \ref{S:cm} establishes the existence and uniqueness of a Banach-space valued square-root process and provides  the link with the Volterra framework.   In Appendix \ref{A:sve} we derive general existence results for {stochastic Volterra equations}. Finally, for the convenience of the reader we recall in Appendix \ref{A:conv} the framework and notations regarding  stochastic convolutions as  in \cite{ALP17}. \vs1

\textbf{Notations {:}} Elements of $\C^m$ are viewed as column vectors, while elements of the dual space $(\C^m)^*$ are viewed as row vectors.  
For $h\ge0$,  $\Delta_h$ denotes the shift operator, i.e.~
 $\Delta_h f(t) = f(t+h).$ If the function $f$ on $\R_+$ is right-continuous and of locally bounded variation, the measure induced by its distribution derivative is denoted $df$, so that $f(t) = f(0) + \int_{[0,t]} df(s)$ for all $t\ge0$.  Finally, we use the notation $*$ for the convolution operation, we refer to Appendix \ref{A:conv} for more details.

\section{{Existence and uniqueness} of the Volterra Heston model}\label{S:extension}
	We study in this section  the existence and uniqueness of the {Volterra Heston model} given by \eqref{E:HestonlogSg}-\eqref{E:HestonVg} allowing for arbitrary curves $g_0$ as input. 
When $g_0$ is given by \eqref{particular_g},  \cite[Theorem 7.1(i)]{ALP17} provides the existence of a $\R^2_+$-valued weak solution to \eqref{E:HestonlogSg}-\eqref{E:HestonVg} under the following mild assumptions on $K$:
\begin{equation} \label{K_gamma}\tag{$H_0$}
\begin{minipage}[c][1.5em][c]{.8\textwidth}
\begin{center}
$K\in L^2_{\rm loc}(\R_+,\R)$, and there is $\gamma\in(0,2]$ such that $\int_0^h K(t)^2dt = O(h^\gamma)$
and $\int_0^T (K(t+h)-K(t))^2 dt = O(h^\gamma)$ for every $T<\infty$,
\end{center}
\end{minipage}
\end{equation}
\begin{equation} \label{eq:K orthant}\tag{$H_1$}
\begin{minipage}[c][1.5em][c]{.82\textwidth}
\begin{center}
$K$ is nonnegative,  not identically zero, non-increasing and continuous on $(0, \infty)$, and its {\em resolvent of the first kind} $L$ is nonnegative and non-increasing in the sense that $s \to  L([s,s+t])$ is non-increasing for all $t\ge0$. 
\end{center}
\end{minipage}
\footnote{We refer to Appendix~\ref{A:conv} for the definition of the {\em resolvent of the first kind} and some of its properties.}
\end{equation}
\vs1

We show in Theorem \ref{T:VolterraHestongExistence} below   that weak existence in $\R_+^2$ continue to hold for \eqref{E:HestonlogSg}-\eqref{E:HestonVg} for a wider class of {admissible input curves} $g_0$. {Since} $S$ is  determined by $V$, it suffices to study the Volterra square-root equation \eqref{E:HestonVg}.   Theorem~\ref{T:existenceg}\eqref{T:existenceg2} in the Appendix guarantees  the existence of an unsconstrained continuous weak solution $V$ to the  following modified equation 
\begin{align}\label{E:modifiedVg}
V_t = g_0(t) +  \int_0^t K(t-s) \left( -\lambda V_s ds + \nu \sqrt{V_s^+} dW_s\right),
\end{align}
for any locally H\"older continuous function $g_0$, where $x^+: x\to\max(0,x)$. 
{Clearly,} one  needs to impose additional assumptions on $g_0$ to ensure the nonnegativity of  $V$ and  drop the positive part in \eqref{E:modifiedVg} so that $V$ solves \eqref{E:HestonVg}. Hence, weak existence of a nonnegative solution to \eqref{E:HestonVg} boils down to finding  a set $\Gc_K$ of {admissible input curves} $g_0$  such that any solution  $V$  to \eqref{E:modifiedVg} is nonnegative. \vs1

\noindent To get a taste of the {admissible set} $\Gc_K$, we start by assuming that $g_0$ and $K$ are continuously differentiable on $[0,\infty)$.  In that case, $V$ is a semimartingale such that 
\begin{align}\label{E:modifieddiffusion}
dV_t  = \left( g_0'(t)  + (K'*dZ)_t -  K(0)\lambda V_t  \right)dt + K(0) \nu \sqrt{V_t^+} dW_t,
\end{align}
where $Z=\int_0^{\cdot}(-\lambda V_s ds + \nu \sqrt{V_s^+}dW_s)$. Relying on Lemma \ref{L:ZX} in the Appendix\footnote{Under   \eqref{eq:K orthant} one can show that   $K'*L$  is right-continuous, non-decreasing and of locally bounded variation (as in Remark~\ref{DeltaK} in the Appendix), thus the associated measure $d(K'*L)$ is well defined.}, we have 
$$ K' = (K'*L)(0) K + d(K'*L)*K,$$
so that  $K'*dZ$ can be expressed as a functional of $(V,g_0)$ as follows 
\begin{align}\label{E:K'dz}
 K'*dZ =  (K'*L)(0) (V - g_0) + d(K'*L)*(V-g_0).
\end{align}
Since $V_0=g_0(0)$, it is straightforward that $g_0(0)$ should be nonnegative. Now, assume that $V$ hits zero for the first time at $\tau \geq 0$.
After plugging \eqref{E:K'dz} in the drift of \eqref{E:modifieddiffusion},  a first-order Euler scheme leads to the formal approximation 
\begin{align*}
 V_{\tau+h} &\approx \left (  g_0'(\tau) - (K'*L)(0)g_0(\tau)  - (d(K'*L)*g_0)(\tau)   +  (d(K'*L)*V)_{\tau} \right) h,
\end{align*}
for  small $h \geq 0$. Since $K'*L$ is non-decreasing and $V \geq 0$ on $[0,\tau]$, it follows that $ (d(K'*L)*V)_{\tau}  \geq 0$ yielding the nonnegativity of $V_{\tau + h}$ if we impose the following additional condition 
\begin{align*}
g_0'- (K'*L)(0)g_0  - d(K'*L)*g_0  \geq 0.
\end{align*}

{In the general case,} $V$ is not necessarily a semimartingale, and a delicate analysis should be carried on the integral equation \eqref{E:modifiedVg} instead of the infinitesimal version \eqref{E:modifieddiffusion}. This suggests  that the infinitesimal derivative operator should be replaced by the semigroup operator of right shifts leading to the following condition  on $g_0$
\begin{equation} \label{Croissance}
\Delta_h g_0 - (\Delta_hK * L)(0) g_0 - d(\Delta_hK * L) * g_0 \geq 0, \quad  h \geq 0, \footnote{
Recall that under  \eqref{eq:K orthant} one can show that   $\Delta_hK*L$  is right-continuous and of locally bounded variation (see Remark~\ref{DeltaK} in the Appendix), thus the associated measure $d(\Delta_hK*L)$ is well defined.}
\end{equation}
and to the following definition of the set $\Gc_K$ of {admissible input curves}
\begin{align}\label{E:DomainG}
{\Gc_K = \left\{ g_0 \in \Hc^{\gamma/2}  \mbox{ satisfying } \eqref{Croissance} \mbox{ and } g_0(0) \geq 0  \right\},}
\end{align} 
where {$\Hc^{\alpha}=\{ g_0:\R_+ \to \R,  \mbox{ locally H\"older continuous of any order strictly smaller than $\alpha$}\}.$} Recall that $\gamma$ is the exponent associated with $K$ in~\eqref{K_gamma}. \vs1

The following theorem establishes the existence of a $\R_+^2$-valued weak continuous solution to \eqref{E:HestonlogSg}-\eqref{E:HestonVg} 
 on some filtered probability space $(\Omega,\Fc,\F=(\Fc_t)_{t \geq 0},\P)$ for any admissible input curve $g_0 \in \Gc_K$. Since $S$ is determined by $V$, the proof follows  directly from Theorems~\ref{T:existenceg}-\ref{T:existence orthant}.
\begin{theorem} \label{T:VolterraHestongExistence}
Assume that $K$ satisfies \eqref{K_gamma}-\eqref{eq:K orthant}. Then, the stochastic Volterra equation \eqref{E:HestonlogSg}-\eqref{E:HestonVg} has a  $\R_+^2$-valued continuous weak solution $( S,V)$ for any positive initial condition $S_0$ and any admissible input curve $g_0 \in \Gc_K$. Furthermore, the paths of $V$ are {locally} H\"older continuous of any order strictly smaller than $\gamma/2$ and 
 		\begin{align}\label{E:momentestimate}
		 \sup_{t \leq T}\E [|V_t|^p] < \infty , \quad p>0, \quad T>0. 
		 \end{align}
\end{theorem}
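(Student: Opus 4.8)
The plan is to reduce the whole statement to the scalar Volterra square-root equation \eqref{E:HestonVg}, since $S$ is recovered from $V$ by the explicit driftless exponential
\begin{align*}
S_t = S_0 \exp\left( \int_0^t \sqrt{V_s}\, dB_s - \frac12 \int_0^t V_s\, ds \right),
\end{align*}
with $B = \rho W + \sqrt{1-\rho^2}\, W^\perp$. This is automatically $(0,\infty)$-valued and continuous once $V$ is a nonnegative continuous process, and the moment bound \eqref{E:momentestimate} guarantees that $\int_0^\cdot \sqrt{V_s}\, dB_s$ is a well-defined (local) martingale so that $S$ is genuinely a process and not merely a formal expression. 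Hence the entire burden of the proof falls on constructing $V$ and establishing its nonnegativity, regularity, and integrability.

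First I would invoke Theorem~\ref{T:existenceg}\eqref{T:existenceg2} to produce a continuous weak solution $V$ to the \emph{modified} equation \eqref{E:modifiedVg}, where the square root is taken of $V^+$. Since $g_0 \in \Gc_K \subset \Hc^{\gamma/2}$ is by definition locally H\"older continuous, the hypotheses of that theorem are satisfied under \eqref{K_gamma}, and it delivers in one stroke the path regularity (local H\"older continuity of any order strictly smaller than $\gamma/2$) and the moment estimate \eqref{E:momentestimate}. At this stage $V$ may still take negative values, so it solves \eqref{E:modifiedVg} but not yet \eqref{E:HestonVg}, and the remaining task is to remove the positive part.

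The decisive step is to show that the structure encoded in $\Gc_K$ --- namely $g_0(0)\ge 0$ together with the monotonicity condition \eqref{Croissance} --- forces $V \ge 0$, so that $\sqrt{V_s^+}=\sqrt{V_s}$ and \eqref{E:modifiedVg} collapses onto \eqref{E:HestonVg}. This nonnegativity is precisely the content I would import from Theorem~\ref{T:existence orthant}. The heuristic behind \eqref{Croissance} --- examine the first hitting time $\tau$ of zero, use the resolvent identity $K' = (K'*L)(0)K + d(K'*L)*K$ of Lemma~\ref{L:ZX} to rewrite $K'*dZ$ as the functional \eqref{E:K'dz} of $(V,g_0)$, and read off the sign of $V_{\tau+h}$ from a first-order Euler step --- is transparent only in the semimartingale regime $g_0, K \in C^1$, where \eqref{E:modifieddiffusion} holds. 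The \textbf{main obstacle} is to make this rigorous in the general rough case, where $V$ is \emph{not} a semimartingale and one must argue at the level of the integral equation \eqref{E:modifiedVg}, with the shift operator $\Delta_h$ replacing differentiation (this is exactly why \eqref{Croissance} is phrased with $\Delta_h K$ rather than $K'$). The natural route, carried out in Theorem~\ref{T:existence orthant}, is an approximation and stability argument: approximate $(K,g_0)$ by smooth data for which the semimartingale Euler reasoning applies and nonnegativity is clear, verify that the approximating curves remain admissible, and then pass to the limit using continuity of the solution map for stochastic Volterra equations.

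Finally, assembling the pieces is immediate: with $V \ge 0$ continuous solving \eqref{E:HestonVg} and satisfying \eqref{E:momentestimate}, the exponential displayed above defines a positive continuous $S$, yielding the desired $\R_+^2$-valued continuous weak solution $(S,V)$ on the filtered probability space, which completes the proof.
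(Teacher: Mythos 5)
Your proposal follows the paper's proof exactly: the paper likewise reduces everything to $V$, obtains an unconstrained continuous weak solution of \eqref{E:modifiedVg} from Theorem~\ref{T:existenceg}\eqref{T:existenceg2} (which already carries the local H\"older regularity and the moment bound \eqref{E:momentestimate}), invokes Theorem~\ref{T:existence orthant} for nonnegativity so that the positive part can be dropped, and recovers $S$ as the stochastic exponential. The only minor inaccuracy is your aside on how Theorem~\ref{T:existence orthant} itself is proved --- it does not smooth $(K,g_0)$ and pass to the limit, but rather reduces to the stronger coefficient condition \eqref{eq:orth:bdry} and then argues directly on the integral equation at the first hitting time of zero via Lemma~\ref{thelemma} and the shift operator $\Delta_h$; since you only cite that theorem, this does not affect the validity of your argument.
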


\begin{example}\label{Ex:Gcal}The following classes of functions  belong to $\Gc_K$. 
\begin{enumerate}[(i)]
\item  	 $g \in \Hc^{\gamma/2}$ non-decreasing such that $g(0)\geq 0$. Since $K$ is non-increasing and  $L$ is nonnegative{,} we have $0 \leq \Delta_h K*L \leq 1$ for all $h\geq 0 $  (see the proof of \cite[Theorem 3.5]{ALP17}) yielding, for all $t,h \geq 0$, that $\Delta_h g(t) - (\Delta_hK * L)(0) g(t) - (d(\Delta_hK * L) *g)(t)$ is equal to 
\begin{align*}
{ \int_0^t (g(t)-g(t-s)) (\Delta_hK*L)(ds) + g(t+h) - g(t) + g(t) (1-(\Delta_hK*L)(t)) \geq 0}
\end{align*}
\item  \label{Ex:gform2}  $g = V_0 + K*\theta $, with $V_0 \geq 0$ and $\theta \in L^2_{loc}(\mathbb R_+, \mathbb R) $ such that $\theta(s) ds+ V_0 L(ds)$ is a nonnegative measure. First, 	 $g \in \Hc^{\gamma/2}$  due to \eqref{K_gamma} and the Cauchy-Schwarz inequality
\begin{align*}
(g(t+h) - g(t))^2 \leq 2 \left( \int_0^t (K(s+h) - K(s))^2 ds + \int_0^h K(s)^2 ds \right) \int_0^{t+h} \theta(s)^2 ds.  
\end{align*}
Moreover, $g(0)=V_0 \geq 0$  and
\begin{align}\label{cal} 
\Delta_hg - (\Delta_hK * L)(0) g- d(\Delta_hK * L) * g  
\end{align}
is equal to 
$$ V_0 (1-\Delta_hK*L) + {\Delta_h (K*\theta)} - (\Delta_hK*L)(0) K*\theta - d(\Delta_hK*L)*K*\theta.$$
 \eqref{Croissance} now follows  from  Lemma \ref{thelemma} with $F=\Delta_h K$, after noticing that  \eqref{cal} becomes
$$ \Delta_h(K*(V_0 L + \theta)) - \Delta_hK*(V_0 L + \theta) = \int_{\cdot}^{\cdot+h}K(\cdot+h-s)(V_0 L(ds) + \theta(s) ds) \geq 0.$$
\end{enumerate}
\end{example}

We now tackle the weak uniqueness of \eqref{E:HestonlogSg}-\eqref{E:HestonVg} by characterizing the Fourier-Laplace transform of the process $X=(\log S, V)$. Indeed,  when $g_0$ is of the form \eqref{particular_g}, $X$ is a two-dimensional affine Volterra process in the sense of \cite[Definition 4.1]{ALP17}. For this particular $g_0$, \cite[Theorem 7.1(ii)]{ALP17} provides the exponential-affine transform formula  
\begin{align}\label{E:Y0g}
\E[ \exp (u X_T +  (f * X)_T)] =\exp \bigg(  \psi_1(T) \log S_0 + u_2 g_0(T) + \int_0^T F(\psi_1,\psi_2)(s)g_0(T-s)ds \bigg)
\end{align}
for suitable $u \in \mathbb (C^2)^*$ and $f \in L^1([0,T], \mathbb (C^2)^*)$ with $T>0$, 
where  $\psi=(\psi_1,\psi_2)$ solves the following system of Riccati-Volterra equations 
\begin{align}
\psi_1 &= u_1+1*f_1,\label{E:RicHeston0} \\
\psi_2 &=u_2K +K*F(\psi_1,\psi_2), \label{E:RicHeston1}
\end{align} 
with 
\begin{align}\label{E:phichi-2}
F(\psi_1,\psi_2) = f_2+\frac12\left( \psi_1^2-\psi_1\right) +(\rho \nu \psi_1- \lambda)  \psi_2 + \frac{\nu^2}{2} \psi^2_2.
\end{align}
	
A straightforward adaptation of \cite[Theorems~4.3 and 7.1]{ALP17}   shows that the  affine transform \eqref{E:Y0g} carries over for any {admissible input curve} $g_0 \in \Gc_K$ with  the same Riccati equations \eqref{E:RicHeston0}-\eqref{E:RicHeston1}.

	\begin{theorem} \label{T:VolterraHestong}
		Assume that $K$ satisfies \eqref{K_gamma} and that the shifted kernels $\Delta_h K$ satisfy \eqref{eq:K orthant} for all $h\in[0,1]$. Fix {$g_0 \in \Gc_K,  S_0 > 0$ and denote  by $(S, V)$  a $\R^2_+$-valued continuous weak solution to  \eqref{E:HestonlogSg}-\eqref{E:HestonVg}. For any $u\in(\mathbb C^2)^*$ and $f\in L^1_{\rm loc}(\R_+,(\mathbb C^2)^*))$ such that
		\begin{align}\label{E:condcoeffrealpart}
		\text{${\rm Re\,} \psi_1 \in[0,1]$, ${\rm Re\,} u_2 \le0$ and ${\rm Re\,} f_2\le0$,}
		\end{align}
		with $\psi_1$  given by \eqref{E:RicHeston0}, the Riccati--Volterra equation \eqref{E:RicHeston1} admits a unique global solution $\psi_2\in L^2_{\rm loc}(\R_+,\mathbb C^*)$. Moreover, the exponential-affine transform  \eqref{E:Y0g} is satisfied. In particular, weak uniqueness holds for \eqref{E:HestonlogSg}-\eqref{E:HestonVg}.}
	\end{theorem}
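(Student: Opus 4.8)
The plan is to reproduce, in this more general setting, the two-part scheme behind \cite[Theorems 4.3 and 7.1]{ALP17}: a deterministic part treating the Riccati--Volterra equation \eqref{E:RicHeston1}, and a probabilistic part establishing the affine transform \eqref{E:Y0g} by a martingale argument. A weak solution $(S,V)$ together with the moment bound \eqref{E:momentestimate} is already provided by Theorem~\ref{T:VolterraHestongExistence}, so once \eqref{E:Y0g} is proved for every admissible pair $(u,f)$ satisfying \eqref{E:condcoeffrealpart}, weak uniqueness follows at once, because the resulting Fourier--Laplace functionals of $X=(\log S,V)$ determine its finite-dimensional distributions and hence its law.

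\textbf{Step 1 (Riccati--Volterra).} Since $\psi_1=u_1+1*f_1$ is explicit, \eqref{E:RicHeston1} is a scalar quadratic convolution equation for $\psi_2$ with kernel $K\in L^2_{\rm loc}$ and nonlinearity $F$ from \eqref{E:phichi-2} that is locally Lipschitz in $\psi_2$. I would first produce a unique local $L^2$ solution by a contraction argument on a short interval, and then extend it globally through an a priori bound excluding blow-up. The condition \eqref{E:condcoeffrealpart} is precisely tailored for this: for $z$ with ${\rm Re}\,z\in[0,1]$ one has ${\rm Re}(z^2-z)\le 0$, hence ${\rm Re}\big(f_2+\tfrac12(\psi_1^2-\psi_1)\big)\le0$; combined with ${\rm Re}\,u_2\le0$ and the nonnegativity and monotonicity of the shifted kernels $\Delta_hK$ and of their resolvents (the standing hypothesis that $\Delta_hK$ satisfies \eqref{eq:K orthant} for $h\in[0,1]$), a positivity/comparison argument on the convolution equation yields the sign bound ${\rm Re}\,\psi_2\le0$ on the whole interval of existence, as in \cite[Section~7]{ALP17}. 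This control prevents the quadratic term $\tfrac{\nu^2}{2}\psi_2^2$ from driving ${\rm Re}\,\psi_2$ upward, giving a local $L^2$ bound uniform up to any finite horizon and thus global existence and uniqueness.

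\textbf{Step 2 (affine transform).} Fixing $T>0$, consider the martingale $M_t=\E\big[\exp(uX_T+(f*X)_T)\mid\Fc_t\big]$, so that $M_T=\exp(uX_T+(f*X)_T)$ and $M_0=\E[\exp(uX_T+(f*X)_T)]$ is the quantity to be identified. The affine Volterra structure predicts $M_t=\exp(Y_t)$, where the exponent $Y_t$ is the realized log-price contribution $\int_0^t\psi_1(T-s)\,d\log S_s$ plus an affine functional, with coefficients $u_2$ and $F(\psi_1,\psi_2)$, of the conditional forward variance curve $s\mapsto\E[V_s\mid\Fc_t]$ on $[t,T]$; in particular $Y_0$ reproduces the right-hand side of \eqref{E:Y0g} with $g_0(x)=\E[V_x]+\lambda\int_0^x K(x-r)\E[V_r]\,dr$. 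To verify the martingale property I would apply It\^o's formula to $\log M_t$, using that $\log S$ is a genuine semimartingale with $d\log S_t=-\tfrac12V_t\,dt+\sqrt{V_t}\,dB_t$ and that for each fixed $s$ the map $t\mapsto\E[V_s\mid\Fc_t]$ is a martingale whose Brownian part is $\nu\,K(s-t)\sqrt{V_t}\,dW_t$; interchanging the $ds$-integration with the stochastic integration by the stochastic Fubini theorem, the finite-variation part of $\log M_t$ cancels exactly through the Riccati--Volterra identity \eqref{E:RicHeston1}, so $M$ is a local martingale. The constraints \eqref{E:condcoeffrealpart} together with ${\rm Re}\,\psi_2\le0$ from Step~1 give $|M_t|\le\exp\big({\rm Re}(u_1)\log S_t+\cdots\big)$, dominated by a uniformly integrable quantity via \eqref{E:momentestimate} and ${\rm Re}\,\psi_1\in[0,1]$; hence $M$ is a true martingale, and $\E[M_T]=M_0$ yields \eqref{E:Y0g}, whence uniqueness by varying $(u,f)$.

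\textbf{Main obstacle.} I expect the principal difficulty to sit in Step~2 and to be twofold. First, $V$ is in general \emph{not} a semimartingale, so the It\^o computation cannot be performed on $V$ directly; one must work with the integrated (forward) formulation, which forces a careful justification of the stochastic Fubini interchange and of the integrability and regularity of the conditional forward variance $s\mapsto\E[V_s\mid\Fc_t]$. Second, one must confirm that a \emph{general} admissible curve $g_0\in\Gc_K$ does not spoil the cancellation, i.e.\ that the conditional forward variance remains admissible so that \eqref{E:RicHeston1} still annihilates the drift. The a priori sign bound ${\rm Re}\,\psi_2\le0$ of Step~1 is the other delicate ingredient, as it simultaneously secures global solvability of the Riccati--Volterra equation and supplies the integrability that closes the true-martingale argument.
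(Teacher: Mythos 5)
Your proposal follows essentially the same route as the paper, which gives no standalone argument but explicitly defers to a ``straightforward adaptation'' of \cite[Theorems~4.3 and~7.1]{ALP17}: global solvability of the Riccati--Volterra equation \eqref{E:RicHeston1} via a local fixed point plus the sign bound ${\rm Re}\,\psi_2\le 0$ obtained from \eqref{E:condcoeffrealpart} and the resolvent properties of the shifted kernels, followed by the exponential local-martingale construction whose drift is cancelled by \eqref{E:RicHeston1} and whose true-martingale property comes from the same real-part bounds. You also correctly locate the only genuinely new ingredient relative to \cite{ALP17}, namely that the exponent must be written through the integrated (forward-variance) formulation for a general $g_0\in\Gc_K$ since $V$ is not a semimartingale, so nothing essential is missing.
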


\section{Markovian structure}\label{S:markovian}
		
 Using the same methodology as in \cite{euch2017perfect}, we characterize the Markovian structure of the {Volterra Heston model} \eqref{E:HestonlogSg}-\eqref{E:HestonVg} in terms of the $\F$-adapted infinite-dimensional adjusted  forward curve $(g_t)_{t \geq 0}$ given by \eqref{E:gt0} which is well defined thanks to~\eqref{E:momentestimate}.  Furthermore, we prove that the set $\Gc_K$ is stochastically invariant with respect to  $(g_t)_{t \geq 0}$.  

\begin{theorem} \label{T:condLaw0}
	Under the assumptions of Theorem \ref{T:VolterraHestongExistence}, fix $g_0 \in \Gc_K$. Denote by $(S,V)$ the unique solution to \eqref{E:HestonlogSg}-\eqref{E:HestonVg} and by $(g_t)_{t \geq 0}$ the process defined by~\eqref{E:gt0}. Then, $(S^{t_0}, V^{t_0} )$ satisfies
 \begin{align*}
dS_t^{t_0} &= S_t^{t_0} \sqrt{V_t^{t_0}} dB_t^{t_0}, \quad S_0^{t_0} = S_{t_0}, \\
V_t^{t_0}&= g_{t_0}(t) +  \int_0^{t} K(t-s) \left( -\lambda V_s^{t_0} ds + \nu \sqrt{V_s^{t_0}}dW_s^{t_0}\right),
\end{align*}
where $(B^{t_0}, W^{t_0}) = (B_{t_0+\cdot}-B_{t_0},W_{t_0+\cdot}-W_{t_0})$ are two Brownian motions independent of ${\cal F}_{t_0}$ such that $d \langle B^{t_0},W^{t_0}\rangle_t = \rho dt$.
Moreover, $\Gc_K$ is stochastically invariant with respect to $(g_{t})_{{t} \geq 0}$, that is  $$g_{t} \in \Gc_K, \quad   t \geq 0.$$
\end{theorem}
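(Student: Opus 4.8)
The plan is to establish first the shifted dynamics, which makes the role of $g_{t_0}$ transparent, and then to verify the three conditions defining $\Gc_K$ in \eqref{E:DomainG} for $g_{t_0}$, the only nontrivial one being the growth condition \eqref{Croissance}. Fix $t_0\ge 0$ and set $Z=\int_0^{\cdot}\left(-\lambda V_s\,ds+\nu\sqrt{V_s}\,dW_s\right)$. Splitting the integral in \eqref{E:HestonVg} at $t_0$ and substituting $u=s-t_0$ on $[t_0,t_0+t]$ yields
\begin{align*}
V_{t_0+t}=\tilde g_{t_0}(t)+\int_0^t K(t-u)\left(-\lambda V_{t_0+u}\,du+\nu\sqrt{V_{t_0+u}}\,dW_u^{t_0}\right),
\end{align*}
where $W^{t_0}=W_{t_0+\cdot}-W_{t_0}$ and $\tilde g_{t_0}(t)=g_0(t_0+t)+\int_0^{t_0}K(t_0+t-s)\,dZ_s$ is $\Fc_{t_0}$-measurable. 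Since $(B^{t_0},W^{t_0})$ is independent of $\Fc_{t_0}$, taking $\E[\,\cdot\mid\Fc_{t_0}]$ annihilates the stochastic integral and, by a conditional Fubini argument, shows that $m:=\E[V_{t_0+\cdot}\mid\Fc_{t_0}]$ solves $m+\lambda K*m=\tilde g_{t_0}$; comparing with \eqref{E:gt0} gives $\tilde g_{t_0}=m+\lambda K*m=g_{t_0}$. This is precisely the asserted equation for $V^{t_0}$, and the one for $S^{t_0}$ follows at once from the multiplicative form of \eqref{E:HestonlogSg}.

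For the invariance, the conditions $g_{t_0}(0)=\E[V_{t_0}\mid\Fc_{t_0}]=V_{t_0}\ge0$ and $g_{t_0}\in\Hc^{\gamma/2}$ are the easy ones: the first uses nonnegativity of $V$ from Theorem \ref{T:VolterraHestongExistence}, and the second follows from the explicit formula for $\tilde g_{t_0}$, estimating increments of the stochastic convolution $\int_0^{t_0}K(t_0+\cdot-s)\,dZ_s$ by Burkholder--Davis--Gundy together with the kernel bound in \eqref{K_gamma} and the moment estimate \eqref{E:momentestimate}, and invoking Kolmogorov's continuity criterion.

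The crux is \eqref{Croissance}. Write $R_h=\Delta_h K*L$ and $\Tc_h f=\Delta_h f-R_h(0)f-dR_h*f$, so that \eqref{Croissance} reads $\Tc_h g_{t_0}\ge0$ for all $h\ge0$. Using $g_{t_0}=\tilde g_{t_0}=\Delta_{t_0}g_0+B$ with $B(t)=\int_0^{t_0}K(t_0+t-s)\,dZ_s$, a direct computation gives
\begin{align*}
\Tc_h(\Delta_{t_0}g_0)(t)=(\Tc_h g_0)(t_0+t)+\int_0^{t_0}g_0(t_0-u)\,dR_h(t+u).
\end{align*}
For the stochastic part I use the shift analogue of the resolvent identity of Lemma \ref{L:ZX}, namely $\Delta_h K=R_h(0)K+dR_h*K$, and a stochastic Fubini to exchange the $dZ_s$ integration with the deterministic measure $dR_h$; this produces
\begin{align*}
\Tc_h B(t)=\int_0^{t_0}(K*dZ)(t_0-u)\,dR_h(t+u)=\int_0^{t_0}\big(V_{t_0-u}-g_0(t_0-u)\big)\,dR_h(t+u),
\end{align*}
the last equality being the original equation $V=g_0+K*dZ$ read at time $t_0-u$. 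Adding the two contributions, the $g_0$ terms cancel and we are left with
\begin{align*}
\Tc_h g_{t_0}(t)=(\Tc_h g_0)(t_0+t)+\int_0^{t_0}V_{t_0-u}\,dR_h(t+u).
\end{align*}
The first term is nonnegative because $g_0\in\Gc_K$ satisfies \eqref{Croissance}, and the second is nonnegative because $V\ge0$ and, under \eqref{eq:K orthant}, $R_h$ is non-decreasing so that $dR_h\ge0$ (Remark \ref{DeltaK}). Hence $g_{t_0}\in\Gc_K$.

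The main obstacle is to carry out these convolution manipulations rigorously when $V$ is merely H\"older rather than a semimartingale: one must justify the stochastic Fubini that swaps the $dZ_s$ integration with $dR_h$, and rely on the appendix facts that, under \eqref{eq:K orthant}, $\Delta_h K*L$ is right-continuous, non-decreasing and of locally bounded variation, which simultaneously validate the identity $\Delta_h K=R_h(0)K+dR_h*K$ and the sign $dR_h\ge0$. Once these measure-theoretic points are settled, the cancellation of the $g_0$ terms is exact and the nonnegativity is immediate.
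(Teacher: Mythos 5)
Your proposal is correct and follows essentially the same route as the paper: both rest on the representation $g_{t_0}=\Delta_{t_0}g_0+\int_0^{t_0}K(t_0+\cdot-s)\,dZ_s$ (the paper's \eqref{Eq:SPDEg}), the resolvent identity $\Delta_h K=(\Delta_h K*L)(0)K+d(\Delta_h K*L)*K$ from Lemma~\ref{thelemma}, and the nonnegativity of $V$ and of the measure $d(\Delta_h K*L)$ from Remark~\ref{DeltaK}. The only difference is presentational: you obtain the exact identity $\Delta_h g_{t_0}-(\Delta_h K*L)(0)g_{t_0}-d(\Delta_h K*L)*g_{t_0}=(\Tc_h g_0)(t_0+\cdot)+\int_0^{t_0}V_{t_0-u}\,(d(\Delta_h K*L))(\cdot+du)$ directly on the explicit $\Fc_{t_0}$-measurable formula for $g_{t_0}$, whereas the paper reaches the same final lower bound by bounding $V^{t_0}_{t+h}$ pathwise and then taking conditional expectations through the definition \eqref{E:gt0}.
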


\begin{proof}
 The part for $V^{t_0}$ is immediate after observing that  
\begin{align}\label{Eq:SPDEg}
g_{t_0}(t) = g_0({t_0 + t}) - \int_0^{t_0} K(t+t_0-s) \lambda V_s ds +   \int_0^{t_0} K(t+t_0-s) \nu \sqrt{V_s} dW_s,  
\end{align}
for all $t_0,t,h \geq 0$. The part for $S^{t_0}$ is straightforward. We move to proving the claimed invariance.  Fix $t_0,t,h \geq 0$ and define $Z=\int_0^{\cdot} (-\lambda V_s ds + \nu \sqrt{V_s}dW_s)$.  By Lemma \ref{thelemma} and Remark \ref{DeltaK} in the Appendix,
\begin{align}
\Delta_h K&= (\Delta_h K*L)(0) K + d(\Delta_h K*L)*K,  \label{E:tempDhK} 
\end{align}
so that 
\begin{align*}
(\Delta_h K *dZ) &= (\Delta_h K*L)(0) (V - g_0) + d(\Delta_h K*L)*(V-g_0). 
\end{align*}
Hence,  
\begin{align*}
 V^{t_0}_{t+h}  &= g_0 (t_0 +t+h) + \left( \Delta_h K * dZ \right)_{t_0+ t}  + \int_0^h K(h-s) dZ_{t_0 + t+s}\\
&= g_0 (t_0 +t+h) + (\Delta_h K*L)(0) (V^{t_0}_t - g_0(t_0+t)) \\
& \quad + \left(d(\Delta_h K*L)*(V-g_0)\right)_{t_0+ t}  + \int_0^h K(h-s) dZ_{t_0 + t+s} \\
&=  g_0 (t_0 +t+h) - (\Delta_h K*L)(0) g_0(t_0+t) -\left(d(\Delta_h K*L)*g_0\right)({t_0+ t})  \\
& \quad + (\Delta_h K*L)(0) V^{t_0}_t +  \left(d(\Delta_h K*L)*V\right)_{t_0+ t}  + \int_0^h K(h-s) dZ_{t_0 + t+s}\\
& \geq (\Delta_h K*L)(0) V^{t_0}_t  +  \left(d(\Delta_h K*L)*V\right)_{t_0+ t}  - \int_0^h K(h-s) \lambda V^{t_0}_{t+s}ds \\
&\quad + \int_0^h K(h-s) \nu \sqrt{V^{t_0}_{t+s}} dW^{t_0}_{t+s},
\end{align*}
since $g_0 \in \Gc_K$. We  now prove \eqref{Croissance}. Set $G^{t_0}_h = \Delta_h g_{t_0} - (\Delta_hK * L)(0) g_{t_0} - d(\Delta_hK * L) * g_{t_0} $.  {The previous inequality combined with  \eqref{E:gt0} yields }
\begin{align*}
G^{t_0}_h(t) &=   \E\left[ V^{t_0}_{t+h} +(\lambda K* V^{t_0})_{t+h}     - (\Delta_hK * L)(0)(V^{t_0}_t +  (\lambda K* V^{t_0})_{t} )   \Mid \Fc_{t_0}\right] \\
 &\quad  - \E\left[  \left(d(\Delta_hK * L) * (V^{t_0} +\lambda K* V^{t_0} ) \right)_t       \Mid \Fc_{t_0}\right] \\
& \geq  \E\left[   \left(d(\Delta_h K*L)*V\right)_{t_0+ t}  -\left(d(\Delta_hK * L) * V^{t_0} \right)_t   - \int_0^h K(h-s) \lambda V^{t_0}_{t+s}ds \Mid \Fc_{t_0}\right] \\
& \quad +  \E\left[  (\lambda K* V^{t_0})_{t+h}  - \left( \left((\Delta_hK * L)(0) K +d(\Delta_hK * L) * K\right)    * \lambda V^{t_0} \right)_{t}    \Mid \Fc_{t_0}\right].
\end{align*}
Relying on \eqref{E:tempDhK}, we deduce 
\begin{align*}
G^{t_0}_h(t) &\geq   \E\left[    \int_t^{t_ 0+t} (d(\Delta_h K*L))(ds) V_{t_0+t-s} - \int_0^h K(h-s) \lambda V^{t_0}_{t+s}ds \Mid\Fc_{t_0}\right] \\
& \quad +  \E\left[  \int_t^{t+h} K(t+h-s) \lambda {V^{t_0}_{s}}ds   \Mid\Fc_{t_0}\right]\\
&= \E\left[    \int_t^{t_ 0+t} (d(\Delta_h K*L))(ds) V_{t_0+t-s}  \Mid\Fc_{t_0}\right].
\end{align*}
Hence  \eqref{Croissance} holds for $g_{t_0}$, since $V\geq0$ and $d(\Delta_h K*L)$ is a nonnegative measure, see Remark \ref{DeltaK}. Finally, by adapting the proof of \cite[Lemma~2.4]{ALP17}, we can show that for any {$ p>1,  \epsilon >0$ and $T >0$, there exists a positive constant $C_1$ such that 
$$\E\left[|V_{t+h} -V_t|^p \right] \leq C_1 h^{p(\gamma/2-\epsilon)}, \quad t,h \geq 0, \; t+h \leq T+t_0, $$
Relying on \eqref{K_gamma}, \eqref{E:gt0} and Jensen inequality, there exists a positive constant $C_2$ such that
$$\E\left[|g_{t_0}(t+h) -g_{t_0}(t)|^p \right] \leq C_2 h^{p(\gamma/2-\epsilon)}, \quad t,h \geq 0, \; t+h \leq T, $$
By  Kolmogorov continuity criterion,   $g_{t_0} \in \Hc^{\gamma/2}$ so that  {$g_{t_0} \in \Gc_K$ since  $g_{t_0}(0) = V_{t_0} \geq 0$. } }
\end{proof}

Theorem \ref{T:condLaw0} highlights that $V$ is Markovian in the state variable $(g_t)_{t \geq 0}$. Indeed, conditional on $\Fc_t$ for some $t\geq 0$, the shifted {Volterra Heston model} $(S^t,V^t)$ can be started afresh from $(S_t,g_t)$ with the same dynamics as in \eqref{E:HestonlogSg}-\eqref{E:HestonVg}. Notice that $g_t$ is again an {admissible input curve} belonging to $\Gc_K$. Therefore, applying Theorems \ref{T:VolterraHestongExistence} and \ref{T:VolterraHestong}  with $(S^t,V^t,g_t)$ yields that the conditional Fourier-Laplace transform of $X=(\log S, V)$ is exponentially affine in $(\log S_t,g_t)$:
\begin{align}\label{E:charwithgt}
\E\left[ \exp(uX_T + (f*X)_T) \Mid \Fc_{t} \right] = \exp\left(\psi_1(T-t)  \log S_{t}     + (u_2g_{t} + F(\psi_1,\psi_2)*g_{t})(T-t)\right),
\end{align}
for all $t \leq T$, where $F$ is given by \eqref{E:phichi-2}, under the standing assumptions of Theorem \ref{T:VolterraHestong}. \vs1

Moreover, it follows from~\eqref{Eq:SPDEg} and the fact that $g_{\cdot}(0)= V$  that the process $(g_t)_{t \geq 0}$ solves 
		\begin{align}\label{E:spdemild}
			g_{t}(x) = \Delta_t g_0(x) + \int_0^{t} \Delta_{t-s}  \left(- \lambda K g_s(0)\right)(x) ds + \int_0^t \Delta_{t-s}  \left( K \nu \sqrt{g_s(0)} \right)(x) dW_s.
		\end{align}
		Recalling that  $(\Delta_t)_{t \geq 0}$ is the semigroup of right shifts,	\eqref{E:spdemild} can be seen as a  $\Gc_K$-valued mild solution of the following  Heath–Jarrow–Morton-type \textit{stochastic partial differential equation}
		\begin{align}\label{E:SPDE}
		dg_t(x)  = \left( \frac{d}{dx} g_t(x)  - \lambda K(x) g_t(0) \right) dt +   K(x) \nu \sqrt{g_t(0)} dW_t,  \quad g_0 \in \Gc_K.
		\end{align}
		The following proposition provides the characteristic functional of  $(g_t)_{t \geq 0}$ leading to  the strong Markov property of $(g_t)_{t \geq 0}$. Define $\langle g,h  \rangle = \int_{\R_+} g(x) h(x) dx$, for suitable functions $f$ and $g$.
		\begin{theorem}\label{T:charfunctionalg} Under the assumptions of Theorem \ref{T:VolterraHestong}.
		   Let $h \in \Cc_c^{\infty}(\R_+)$  and $g_0 \in \Gc_K$. Then,
		\begin{align}\label{E:charfunctionalg}
		\E\left[  \exp\left(   {\rm i} \langle g_t, h \rangle  \right) \right] = \exp \left( \langle  H_{t}  , g_0 \rangle \right), \quad t \geq 0,
		\end{align}
		where $H$ solves 
		\begin{equation}\label{E:Htransform}
		H_t(x) =  {\rm i} h(x-t)  \mathds{1}_{\{x > t\}} + \mathds{1}_{\{x \leq t\}} \bigg( -  \lambda \langle H_{t-x}, K \rangle + \frac{\nu^2}2 \langle H_{t-x}, K \rangle^2\bigg), \quad {t,x \geq 0}.
		\end{equation}
		In particular, weak uniqueness holds for \eqref{E:spdemild} and $(g_t)_{t \geq 0}$ is a strong Markov process on $\Gc_K$.
	   \end{theorem}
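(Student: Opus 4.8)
\emph{Proof strategy.} The natural approach is guess-and-verify at the level of the Heath--Jarrow--Morton dynamics \eqref{E:SPDE}. Fix $t\ge0$ and $h\in\Cc_c^\infty(\R_+)$, let $H$ solve \eqref{E:Htransform}, and consider the candidate process
\[
N_s := \exp\big(\langle H_{t-s},\, g_s\rangle\big),\qquad 0\le s\le t.
\]
The two-branch boundary structure of \eqref{E:Htransform} is tailored precisely so that the endpoints of $N$ are the quantities we wish to relate: at $s=t$ one has $H_0(x)=\mathrm{i}h(x)$ for $x>0$, whence $N_t=\exp(\mathrm{i}\langle g_t,h\rangle)$, while at $s=0$ one has $N_0=\exp(\langle H_t,g_0\rangle)$, which is deterministic since $g_0$ is. Thus \eqref{E:charfunctionalg} follows immediately from $\E[N_t]=N_0$, i.e. from the fact that $N$ is a true martingale, and the whole proof reduces to establishing that martingale property.

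\emph{Where \eqref{E:Htransform} comes from.} Applying It\^o's formula to $N_s=\exp(\langle H_{t-s},g_s\rangle)$ along \eqref{E:SPDE}, and writing $\partial_\tau H$ for the derivative in the time index, the drift of $N$ equals $N_s$ times
\[
-\big\langle \partial_\tau H_{t-s}+\partial_x H_{t-s},\,g_s\big\rangle+\Big(-H_{t-s}(0)-\lambda\langle H_{t-s},K\rangle+\tfrac{\nu^2}{2}\langle H_{t-s},K\rangle^2\Big)\,g_s(0),
\]
after an integration by parts that transfers the transport generator $\partial_x$ of \eqref{E:SPDE} onto $H$ and isolates the boundary contribution at $x=0$; the quadratic term $\tfrac{\nu^2}{2}\langle H_{t-s},K\rangle^2$ is the It\^o correction from the noise coefficient $K\nu\sqrt{g_s(0)}$. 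The first bracket vanishes because the explicit form \eqref{E:Htransform} makes $H$ a solution of the free transport equation $\partial_\tau H_\tau+\partial_x H_\tau=0$ in each of the two regions $x\gtrless\tau$; the second vanishes because \eqref{E:Htransform} at $x=0$ reads exactly $H_\tau(0)=-\lambda\langle H_\tau,K\rangle+\tfrac{\nu^2}{2}\langle H_\tau,K\rangle^2$. Hence $N$ is a local martingale if and only if $H$ solves \eqref{E:Htransform}.

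\emph{Well-posedness of \eqref{E:Htransform} and integrability.} Testing \eqref{E:Htransform} against $K$ collapses it to a scalar Riccati--Volterra equation for $\phi(t):=\langle H_t,K\rangle$, namely $\phi = c + K*\big(-\lambda\phi+\tfrac{\nu^2}{2}\phi^2\big)$ with source $c(t):=\mathrm{i}\int_0^\infty h(y)K(t+y)\,dy$, which is of the same type as \eqref{E:RicHeston1} with $\psi_1\equiv0$ and a purely imaginary distributed source. Global existence and uniqueness of $\phi$, together with the sign control $\mathrm{Re}\,H_t\le0$, then follow from the Riccati--Volterra analysis behind Theorem \ref{T:VolterraHestong} (the nonpositivity of $\mathrm{Re}\,F$ along affine-Volterra solutions, in the regime \eqref{E:condcoeffrealpart} with $\mathrm{Re}\,c\equiv0$); feeding $\phi$ back into \eqref{E:Htransform} recovers $H$ uniquely. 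Since each $g_s$ is $\Gc_K$-valued and hence nonnegative, $\mathrm{Re}\langle H_{t-s},g_s\rangle=\langle\mathrm{Re}\,H_{t-s},g_s\rangle\le0$, so $|N_s|\le1$; a bounded local martingale is a true martingale, which closes the core argument.

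\emph{Main obstacle and conclusions.} The delicate point is making the infinite-dimensional It\^o computation rigorous: for rough $K$ the mild solution $g$ of \eqref{E:spdemild} is not a semimartingale, and $H_{t-s}$ jumps across $x=t-s$, so the integration by parts must be read weakly. I would handle this either by first proving the identity for smooth $(K,g_0)$, where $V$ is a genuine semimartingale and \eqref{E:modifieddiffusion} applies, and then passing to the limit using the moment bound \eqref{E:momentestimate} and the continuity of $\phi$ and $H$ in the data; or, bypassing stochastic calculus in function space altogether, by using the resolvent identity $\Delta_y K=(\Delta_y K*L)(0)K+d(\Delta_y K*L)*K$ to rewrite $\langle g_t,h\rangle$ as a deterministic term plus a real-linear functional of the path $V|_{[0,t]}$, and then invoking the finite-dimensional affine transform \eqref{E:Y0g} with $u_1=f_1=0$ and $(u_2,f_2)$ read off from $h$. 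Finally, \eqref{E:charfunctionalg} holding for every $h\in\Cc_c^\infty(\R_+)$ determines the law of $g_t$; running the same computation from an arbitrary starting time, combined with the restart property of Theorem \ref{T:condLaw0} that exhibits $(g_{t_0+\cdot})$ as the forward curve of the model reinitialised at $g_{t_0}\in\Gc_K$, pins down all finite-dimensional distributions and hence yields weak uniqueness for \eqref{E:spdemild}. The strong Markov property then follows from this restart property, the continuity of $g_0\mapsto\exp(\langle H_t,g_0\rangle)$, and the strong Markov property of the driving Brownian motion $W$.
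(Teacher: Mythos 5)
Your overall skeleton is sound and several ingredients match the paper exactly: the identification of $\phi(t)=\langle H_t,K\rangle$ as the solution of a scalar Riccati--Volterra equation equivalent to \eqref{E:RicHeston1} after the shift $\psi_2=\phi-c$, the sign control ${\rm Re}\,H_t\le 0$ coming from the regime \eqref{E:condcoeffrealpart} with purely imaginary data, and the passage from one-dimensional distributions to weak uniqueness and the strong Markov property. However, your primary argument has a genuine gap that you flag but do not close: the It\^o computation for $N_s=\exp(\langle H_{t-s},g_s\rangle)$ is purely formal. The process $(g_s)$ is only a \emph{mild} solution of \eqref{E:spdemild} (for rough $K$ it is not a semimartingale in any pointwise sense), $H_\tau$ has a jump across $x=\tau$ so $\partial_\tau H+\partial_x H=0$ only holds distributionally with cancelling Dirac masses, and the weak formulation of \eqref{E:SPDE} established in the paper is only tested against $h\in\Cc_c^\infty$, not against discontinuous, non-compactly-supported test functions like $H_{t-s}$. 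Your proposed repair by smoothing $(K,g_0)$ and passing to the limit is itself a nontrivial stability problem (the law of $V$, hence of $g_t$, depends on $K$), so as written the martingale property of $N$ is asserted rather than proved.

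Your second, ``bypass'' route is essentially the paper's actual proof, but it is mis-specified at the key point. From \eqref{Eq:SPDEg},
\begin{equation*}
\langle g_t,h\rangle=\langle g_0(t+\cdot),h\rangle-\lambda\int_0^t\langle K(t-s+\cdot),h\rangle V_s\,ds+\nu\int_0^t\langle K(t-s+\cdot),h\rangle\sqrt{V_s}\,dW_s,
\end{equation*}
and the last term is a stochastic integral against $dW$, \emph{not} of the form $u_2V_t+(f_2*V)_t$ with $f_2\in L^1$; so taking $u_1=f_1=0$ in \eqref{E:Y0g} cannot work. (Rewriting it through the resolvent identity, as you suggest, produces a convolution of $V$ against the measure $\int_0^\infty h(y)\,d(\Delta_yK*L)\,dy$, which is outside the scope of \eqref{E:Y0g} as stated.) The paper resolves this by introducing the auxiliary martingale $\widetilde S_t=1+\int_0^t\widetilde S_u\sqrt{V_u}\,dW_u$, i.e.\ a Volterra Heston model with $\rho=1$, writing $\sqrt{V_s}\,dW_s=d(\log\widetilde S)_s+\tfrac12V_s\,ds$, and integrating by parts to express $\langle g_t,h\rangle$ as $u_1\log\widetilde S_t+(f_1*\log\widetilde S)_t+(f_2*V)_t$ plus a deterministic term with \emph{nonzero} $u_1={\rm i}\nu\langle K,h\rangle$ and $f_1(s)=-{\rm i}\nu\langle K,h'(-s+\cdot)\rangle$; Theorem \ref{T:VolterraHestong} then gives \eqref{E:charfunctionalg} directly, and \eqref{E:Htransform} follows from the algebraic identity $\langle H_t,K\rangle={\rm i}\langle h,K(t+\cdot)\rangle+\psi_2(t)$. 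If you adopt this device, your argument becomes complete and coincides with the paper's; without it, both of your routes leave an unclosed step.
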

		\begin{proof} Consider  $\widetilde S_t= 1 + \int_0^t \widetilde S_u \sqrt{V_u}dW_u$, for all  $t \geq 0.$ Then, $(\widetilde S, V)$ is a {Volterra Heston model} of the form \eqref{E:HestonlogSg}-\eqref{E:HestonVg} with $\rho=1$ and $\widetilde S_0=1$.
			Fix $t \geq 0$, $\langle g_t,h \rangle$ is well defined since $x \to g_t(x)$ is continuous. It follows from~\eqref{Eq:SPDEg} together with stochastic Fubini theorem, see \cite[Theorem~2.2]{V:12}, which is justified by \eqref{E:momentestimate},  that 
			\begin{align*}
			\langle  g_t,h\rangle &= \langle  g_0(t+\cdot),h \rangle + \left(\frac{\nu}2 - \lambda\right) \int_0^t \langle   K(t-s+ \cdot),h \rangle  V_s ds   + \nu \int_0^t    \langle  K(t-s+\cdot),h\rangle d(\log \widetilde S)_s \\
			 &= \langle  g_0,h(-t+\cdot) \rangle + \left(\frac{\nu}2 - \lambda\right) \int_0^t \langle   K,h(s-t+ \cdot) \rangle  V_s ds \\
			 &\quad  + \nu\langle K, h\rangle \log \widetilde S_t  - \nu \int_0^t    \langle  K,h'(s-t+\cdot)\rangle \log \widetilde S_sds,
			\end{align*}
			where the last identity follows from an integration by parts. 
			Hence,     setting 
			\begin{align*}
			u_2 &= 0,  \quad u_1 =  {\rm i} \nu \langle K , h \rangle , \quad  f_1(t) =   -{\rm i}\nu \langle K, h'(-t+\cdot) \rangle, \\
			\psi_1(t) &= u_1 + (1*f_1)(t) = {\rm i }\nu \langle K(t+\cdot), h \rangle , \\
			f_2 (t) &=  {\rm i }(\frac{\nu}2 - \lambda)  \langle K(t+\cdot), h \rangle,  \quad  \psi_2 = K*F(\psi_1, \psi_2),
			\end{align*}
	with $F$ as in \eqref{E:phichi-2}, the characteristic functional follows from Theorem~\ref{T:VolterraHestong}
			\begin{align*}
			\E\left[  \exp\left(   {\rm i} \langle g_t, h \rangle  \right)  \right] &= e^{ {\rm i} \langle h(-t+ \cdot),g_0 \rangle   } \E\left[  \exp\left(  u_1 \log \widetilde S_t + (f_1 * \log \widetilde S)_t + (f_2*V)_t \right)  \right] = \exp \left( \langle  H_t  , g_0 \rangle \right)
			\end{align*}
			where 	$$H_t(x) = h(x-t) \mathds{1}_{\{x > t\}} + \mathds{1}_{\{ 0 \leq  x \leq t\}} F(\psi_1,\psi_2)(t-x), \quad  x \geq 0,$$
		and \eqref{E:phichi-2} reads
		\begin{align}\label{E:chi'tilde}
		 F(\psi_1,\psi_2)(t)&= -\lambda  \langle K(t+\cdot), h \rangle    + \frac{\nu^2}2  \langle K(t+\cdot), h \rangle ^2  \nonumber \\
		 &\quad + ( \nu^2  \langle K(t+\cdot), h \rangle  - \lambda)\psi_2(t) + \frac{\nu^2}2 \psi_2(t)^2.
		\end{align}
	Now observe that 
	\begin{align*}
	 \langle H_t,K\rangle  =  \langle  h(-t+\cdot),K \rangle + \int_0^t  F(\psi_1,\psi_2)(t-x) K(x) dx   =  \langle  h,K(t+\cdot) \rangle +\psi_2(t) . 	
	\end{align*}	
			Hence, after plugging  $\psi_2(t) =  \langle H_t,K\rangle -  \langle h, K(t+ \cdot) \rangle$ back in~\eqref{E:chi'tilde} we get that
			$$  F(\psi_1,\psi_2)(t) = -\lambda  \langle H_t,K\rangle + \frac{\nu^2}2  \langle H_t,K\rangle^2,$$
			yielding \eqref{E:Htransform}.  Weak uniqueness now follows by standard arguments. In fact, thanks to \eqref{E:momentestimate} and stochastic Fubini theorem, $(g_t)_{t \geq 0}$ solves \eqref{E:SPDE} in the weak sense, that is 
			$$ \langle g_t,h \rangle  = \langle g_0,h \rangle + \int_0^t \left(\langle g_s,-h' \rangle  -\lambda   \langle K,h \rangle g_s(0)  \right)ds  + \int_0^t  \nu\langle K,h \rangle \sqrt{g_s(0)}dW_s , \quad h\in \Cc^{\infty}_c(\R).$$ Therefore, combined with Theorem \ref{T:condLaw0}, $(g_t)_{t \geq 0}$ solves a martingale problem on $\Gc_K$. In addition,  \eqref{E:charfunctionalg} yields uniqueness of the one-dimensional distributions which is enough to get   weak uniqueness  for \eqref{E:spdemild} and the strong Markov property by \cite[Theorem~4.4.2]{EK86}. 
		\end{proof}
		
		
			We  notice that \eqref{E:charfunctionalg}-\eqref{E:Htransform} agree with \cite[Proposition 4.5]{GKR18} when $\lambda=0$. {Moreover, one can lift \eqref{E:Htransform} to a non-linear partial differential equation in duality with \eqref{E:SPDE}.  Indeed,  define the measure-valued  function 
			$ \bar H: t \to \bar H_t(dx) = H_t(x) \mathds{1}_{\{x \geq 0\}} dx.$ Then, it follows from \eqref{E:Htransform} that 
			\begin{align*}
			\bar H_t(dx) 
			&= {\rm i } h(x-t) \mathds{1}_{\{x > t\}}dx +  \int_0^t \delta_0(dx - (t-s)) ( -  \lambda \langle \bar H_{s}, K \rangle + \frac{\nu^2}2 \langle \bar H_{s}, K \rangle^2)ds
			\end{align*}
			which can be seen as the mild formulation of the following partial differential equation
					\begin{equation}\label{E:pdemeasure}
					 d\bar H_t(dx) = (-\frac{d}{d x} \bar H_t(dx) + \delta_0(dx ) ( -  \lambda \langle \bar H_{t}, K \rangle + \frac{\nu^2}2 \langle \bar H_{t}, K \rangle^2)) dt, \;\;\, \bar H_0(dx)={\rm i} h(x) \mathds{1}_{\{x > t\}}dx . 
					\end{equation}
We refer to \cite{CT18b,CT18a} for similar results in the discontinuous setting.
		The previous results highlight not only the correspondence   between  {stochastic Volterra equations} of the form \eqref{E:HestonVg} and {stochastic partial differential equations} \eqref{E:SPDE} but also between their dual objects, that is the {Riccati-Volterra equation} \eqref{E:RicHeston1} and the non-linear {partial differential equation} \eqref{E:pdemeasure}. 
		One can establish a correspondence between \eqref{E:HestonVg} and other related {stochastic partial differential equations} which, unlike $(g_t)_{t \geq 0}$, do not necessarily  have a financial interpretation but  for which the dual object satisfies a nicer non-linear {partial differential equation} than \eqref{E:pdemeasure}, see \cite{mytnik2015uniqueness}. }

	\section{Application: {square-root process in Banach space}}\label{S:cm}
	As an application of Theorems \ref{T:VolterraHestongExistence}, \ref{T:VolterraHestong}, \ref{T:condLaw0}, we obtain conditions for weak existence and uniqueness of the following (possibly) infinite-dimensional system of stochastic differential equations
	\begin{align}\label{eq:Ygamma}
	dU_t(x) = \left(- x U_t(x)  -\lambda  \int_0^{\infty} U_t(z) {\mu}(dz)    \right) dt + \nu \sqrt{\int_0^{\infty} U_t({z}) {\mu}(dz) } dW_t,  \quad   x \in {\supp(\mu)},
	\end{align} 
	for a fixed positive measure of locally bounded variation  $\mu$\footnote{We  use the notation $\supp(\mu)$ to denote the   support of a measure $\mu$, that is the  set of all points {for which every  open neighborhood has a positive measure. Here we assume that the support is in $\R_+$.}}. {This} is achieved by linking \eqref{eq:Ygamma} to a stochastic Volterra equation of the form  \eqref{E:HestonVg}  {with the following kernel}
	\begin{align}\label{eq:laplace mu}
	K(t)= \int_0^{\infty} e^{-x t} \mu(dx), \quad  t>0 .
	\end{align}
	We will assume that $\mu$ is a positive measure of locally bounded variation such that 
	\begin{equation}   \label{mu_gamma}\tag{$H_2$}
\int_0^\infty (1\wedge (x h)^{-1/2}) \mu(dx) \leq C h^{(\gamma-1) / 2 }, \quad 	\int_0^\infty x^{-1/2}(1\wedge (x h)) \mu(dx) \leq C h^{\gamma / 2};   \quad h > 0, 
	\end{equation}
	for some $\gamma \in (0,2]$ and positive constant $C$. The reader may check that in that case $K$ satisfies \eqref{K_gamma}. Furthermore, \cite[Theorem 5.5.4]{GLS90} guarantees the existence of the resolvent of the first kind $L$ of $K$ and that \eqref{eq:K orthant}  is satisfied for the shifted kernels $\Delta_h K$ for any $h \in [0,1]$. Hence, $K$ satisfies assumptions of Theorems \ref{T:VolterraHestongExistence} and \ref{T:VolterraHestong}.\vs1

	By a solution {$U$} to \eqref{eq:Ygamma} we mean a family of continuous processes $(U(x))_{x \in \supp(\mu)}$ such that $x \to U_t(x) \in L^1(\mu)$ for any $t \geq 0$, $(\int_0^\infty U_t(x) \mu(dx))_{t \geq 0}$ is a continuous process and {\eqref{eq:Ygamma} holds a.s.~on some filtered probability space.}
	If {such solution exists}, we set {$V = \int_0^\infty U_{\cdot}(x) \mu(dx)$ and $g_0 = \int_0^\infty U_0(x) e^{-x(\cdot)} dx $.} Thanks to \eqref{mu_gamma},  the stochastic Fubini theorem yields for each $t \geq 0$ 
	\begin{equation} \label{sol1}
	V_t = g_0(t) + \int_0^t K(t-s) (-\lambda V_s ds + \nu \sqrt{V_s} dW_s){.}
	\end{equation}
	 The processes above being continuous, the equality holds in terms of processes. Thus, provided that $g_0$ belongs to $\Gc_K$, {Theorem \ref{T:VolterraHestong}} leads to the weak uniqueness of \eqref{eq:Ygamma} {because for each $x \in \supp(\mu)$,
\begin{equation}  \label{V_t_x}
 U_t(x) = e^{-xt} U_0(x) + \int_0^t e^{-x(t-s)}  (-\lambda V_s ds + \nu \sqrt{V_s} dW_s), \quad t \geq 0.
	\end{equation}}
	
	On the other hand, if we assume that $g_0 = \int_0^\infty U_0(x) e^{-x(\cdot)} \mu(dx) \in \Gc_K$ for some initial family of points $(U_0(x))_{x \in \supp(\mu)} \in L^1(\mu)$, there exists a continuous solution $V$ for \eqref{sol1} by Theorem \ref{T:VolterraHestongExistence}. In that case, we define for each $x \in \supp(\mu)$, the continuous process {$U(x)$ as in \eqref{V_t_x}.} Thanks to {\eqref{mu_gamma} and  \eqref{E:momentestimate}}, another application of the} stochastic Fubini theorem combined with the fact that $V$ satisfies \eqref{sol1} yields that, for each $t \geq 0$,  $(U_t(x))_{x \in \supp(\mu)} \in L^1(\mu)$ and 
\begin{equation}  \label{equality_V}
V_t = \int_0^\infty U_t(x) \mu(dx). 
\end{equation}
Moreover, by an integration by parts, we get for each $x \in \supp(\mu)$,
$$ U_t(x) =  e^{-xt}U_0(x)+ Z_t  e^{-x t} + \int_0^t x e^{-x(t-s)} (Z_s - Z_t) ds,  $$
with $Z = \int_0^\cdot (-\lambda V_s ds + \nu \sqrt{V_s} dB_s)$. {We know that for  fixed $T>0$, $\eta \in (0, 1/2)$ and for almost any $\omega \in \Omega$ there exists a positive constant $C_T(\omega)$ such that 
$ |Z_s - Z_t| \leq C_T(\omega) |t-s|^{\eta}$ for all   $t,s \in [0, T]$.} Hence for any $t \in [0,T]$ and $x \in \supp(\mu)$
$$ |U_t(x)| \leq |U_0(x)| + C_T(\omega) {e^{-xt}} t^\eta +C_T(\omega) x \int_0^t e^{-xs} s^{\eta} ds = |U_0(x)| +C_T(\omega) {\eta \int_0^t e^{-xs} s^{\eta-1} ds.}  $$
Then, 
$$ \sup_{t \in [0,T]}|U_t(x)| \leq  |U_0(x)| +C_T(\omega)  {\eta}\int_0^T e^{-xs} s^{\eta} ds \in L^1(\mu).  $$
Therefore by  dominated convergence {theorem}, the process $(\int_0^\infty U_t(x) {\mu(dx)})_{t \geq 0}$ is continuous. In particular, \eqref{equality_V} holds in terms of processes and it follows from \eqref{V_t_x} that $U$ is a solution of \eqref{eq:Ygamma}. 

This leads to the weak existence and uniqueness of \eqref{eq:Ygamma} if the initial family of points $(U_0(x))_{x \in \supp(\mu)}$ belongs to the following space ${\cal D}_\mu$ defined by
\begin{equation} \label{A_space}
{\cal D}_\mu = \{(u_x)_{x \in \supp(\mu)} \in L^1({\supp(\mu)}); \quad \int_0^\infty u_x e^{-x (\cdot)} \mu(dx) \in \Gc_K \},  
\end{equation}
with $K$ given by \eqref{eq:laplace mu}. Notice that for fixed $t_0 \geq 0$ and for any $t \geq 0$ and $x \in \supp(\mu)$,
$$ U_{t + t_0}(x) = U_{t_0}(x) e^{-xt} + \int_0^t e^{-x(t-s)} {\left(-\lambda \int_0^\infty U_{s+t_0}(z) \mu(dz) + \nu \sqrt{ \int_0^\infty {U}_{s+t_0}(z) \mu(dz)} dW_{s+{t_0}}\right) }$$
and then by stochastic Fubini theorem
$$ \int_0^\infty U_{t + t_0}(y) \mu(dy) = g_{t_0}(t) + \int_0^t {K(t-s)} {\left(-\lambda \int_0^\infty U_{s+t_0}(z) \mu(dz) + \nu \sqrt{ \int_0^\infty U_{s+t_0}(z) \mu(dz)} dW_{s+t_0}\right)}, $$
with $g_{t_0}(t) = \int_0^\infty U_{t_0}(y) e^{-yt} \mu(dy)$. Thanks to Theorem \ref{T:condLaw0}, we deduce that $g_{t_0} \in \Gc_K$ and therefore $({U}_{t_0}(x))_{x \in \supp(\mu)}$ belongs to ${\cal D}_\mu$. As a conclusion, the space ${\cal D}_\mu$ is stochastically invariant with respect to the family of processes $(U(x))_{x \in \supp(\mu)}$. 
\begin{theorem} Fix $\mu$ a positive measure of locally bounded variation satisfying \eqref{mu_gamma}.There exists a unique weak solution $U$ of \eqref{eq:Ygamma} for each initial family of points $(U_0(x))_{x \in \supp(\mu)} \in {\cal D}_\mu$. Furthermore for any $t \geq 0$, $ {{(U_t(x))_{x \in \supp(\mu)}}} \in  {\cal D}_\mu$.
\end{theorem}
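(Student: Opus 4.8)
The plan is to reduce the entire statement---existence, uniqueness, and invariance---for the infinite-dimensional system \eqref{eq:Ygamma} to the corresponding results already established for the scalar Volterra square-root equation \eqref{sol1}, using the Laplace-transform identity \eqref{eq:laplace mu} between the kernel $K$ and the measure $\mu$. The structural observation driving everything is the correspondence between a solution $U$ of \eqref{eq:Ygamma} and its aggregate $V=\int_0^\infty U_\cdot(x)\mu(dx)$: from $U$ one recovers $V$ by integrating against $\mu$, while from $V$ one recovers each coordinate $U(x)$ via the variation-of-constants formula \eqref{V_t_x}. As carried out in the discussion preceding the theorem, the stochastic Fubini theorem---legitimate under \eqref{mu_gamma}---converts \eqref{V_t_x} into \eqref{sol1}, so that $V$ solves the scalar equation with input curve $g_0=\int_0^\infty U_0(x)e^{-x(\cdot)}\mu(dx)$.

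For weak uniqueness I would take an arbitrary solution $U$ with $(U_0(x))_{x\in\supp(\mu)}\in{\cal D}_\mu$. By the very definition \eqref{A_space} of ${\cal D}_\mu$ the associated $g_0$ lies in $\Gc_K$, so Theorem~\ref{T:VolterraHestong} determines the law of $V$; since each $U_t(x)$ is a deterministic functional of $V$ through \eqref{V_t_x}, the joint law of the whole family $(U_t(x))_{x\in\supp(\mu)}$ is thereby fixed, which is the desired uniqueness.

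For weak existence I would start from $(U_0(x))_{x\in\supp(\mu)}\in{\cal D}_\mu$, invoke Theorem~\ref{T:VolterraHestongExistence} to obtain a continuous weak solution $V$ of \eqref{sol1}, and define each coordinate $U(x)$ by \eqref{V_t_x}. The computations preceding the theorem then give $(U_t(x))_{x\in\supp(\mu)}\in L^1(\mu)$ and the aggregate identity \eqref{equality_V}; the one genuinely delicate point is the continuity of $t\mapsto\int_0^\infty U_t(x)\mu(dx)$, which I would secure through the integration-by-parts representation of $U_t(x)$ in terms of the pathwise H\"older-continuous process $Z$, the uniform bound $\sup_{t\le T}|U_t(x)|\le|U_0(x)|+C_T(\omega)\,\eta\int_0^T e^{-xs}s^\eta\,ds\in L^1(\mu)$, and dominated convergence.

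Finally, for the invariance $(U_t(x))_{x\in\supp(\mu)}\in{\cal D}_\mu$ I would fix $t_0\ge0$, write the shifted coordinates $U_{t+t_0}(x)$ through their own variation-of-constants formula, and apply stochastic Fubini once more to obtain $\int_0^\infty U_{t+t_0}(y)\mu(dy)=g_{t_0}(t)+\int_0^t K(t-s)(\dots)$ with $g_{t_0}(t)=\int_0^\infty U_{t_0}(y)e^{-yt}\mu(dy)$; the stochastic invariance of $\Gc_K$ from Theorem~\ref{T:condLaw0} forces $g_{t_0}\in\Gc_K$, which is exactly membership of $(U_{t_0}(x))_{x\in\supp(\mu)}$ in ${\cal D}_\mu$. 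I expect the main obstacle throughout to be the repeated justification of the stochastic Fubini interchange and the attendant $L^1(\mu)$-integrability, both of which rest on the two-sided growth condition \eqref{mu_gamma} together with the moment estimate \eqref{E:momentestimate} and the pathwise regularity of $Z$.
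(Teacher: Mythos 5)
Your proposal is correct and follows essentially the same route as the paper: the Laplace-transform correspondence $K(t)=\int_0^\infty e^{-xt}\mu(dx)$, the aggregation $V=\int_0^\infty U_\cdot(x)\mu(dx)$ reducing \eqref{eq:Ygamma} to the scalar equation \eqref{sol1} via stochastic Fubini, recovery of each coordinate through \eqref{V_t_x}, the dominated-convergence argument for continuity of the aggregate, and the invariance of ${\cal D}_\mu$ deduced from the invariance of $\Gc_K$ in Theorem~\ref{T:condLaw0}. This is precisely the argument the paper develops in the discussion preceding the theorem.
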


\begin{table}[h!]
	\centering
	\begin{tabular}{c c c c }
		\hline
		& $K(t)$ & Parameter restrictions & $\mu(d\gamma)$ \\ 
		\hline
		Fractional		& $c\,\frac{t^{\alpha-1}}{\Gamma(\alpha)}$ & $\alpha \in (1/2,1)$ & $c\frac{x^{- \alpha}}{\Gamma(\alpha)\Gamma(1-\alpha)}dx$\\ \\
		Gamma		& $c{\rm e}^{-\lambda t} \frac{t^{\alpha-1}}{\Gamma(\alpha)} $ &   $\lambda \geq 0, \alpha \in (1/2,1)$  & $c\frac{(x-\lambda)^{-\alpha} \mathds{1}_{( \lambda, \infty)}(x)}{\Gamma(\alpha)\Gamma(1-\alpha)}dx $ \\\\
		Exponential sum	& $\displaystyle\sum_{i=1}^n c_i {\rm e}^{-\gamma_i t}$ & $c_i,\gamma_i \geq 0$ & $ \displaystyle \sum_{i=1}^n c_i \delta_{\gamma_i}(dx) $\\ 
		\hline
	\end{tabular}
	\caption{Some measures $\mu$ satisfying \eqref{mu_gamma} with their associated kernels $K$. Here $c\geq 0$.}
	\label{T:summarycm}
\end{table}

\begin{remark}[Representation of $V$ in terms of $U$]\label{R:repV} 
 In a similar fashion one can establish the existence and uniqueness of the following time-inhomogeneous version of \eqref{eq:Ygamma}
		\begin{align}\label{E:Ygamma2}
			dU_t(x) = \left(- x U_t(x)  -\lambda \left( g_0(t) + \langle 1, U_t\rangle_{\mu}  \right )   \right) dt + \nu \sqrt{g_0(t) +  \langle 1, U_t\rangle_{\mu} } dW_t, \quad x \in \supp(\mu),
			\end{align}
		whenever 		$$g_0 = {\widetilde{g_0} } + \int_0^{\infty} e^{- x (\cdot)} U_0(x) \mu(dx)    \in \Gc_K, $$ 
		with ${\widetilde{g_0}}: \R_+ \to \R$. In this case,  
		$$ {\widetilde{g_0}(t+\cdot)} +  \int_0^{\infty} e^{-x (\cdot)}  U_{t}(x) \mu(dx) \in \Gc_K, \quad  t \geq 0.$$
		In particular, for  $U_0 \equiv 0$, $g_0 ={\widetilde{g_0}} \in \Gc_K$ and $K$ as in \eqref{eq:laplace mu}, the  solution $V$ to the stochastic Volterra equation \eqref{E:HestonVg} and the  forward process $(g_t)_{t \geq 0}$  admit the following  representations 
			\begin{align}
			V_t=  g_0(t) + \langle 1, U_t \rangle_{\mu}, \quad g_{t_0}(t) = g_0(t_0+t) + \langle e^{-t(\cdot)}, U_t \rangle_{\mu}, \quad  t,t_0 \geq 0,\label{E:rep g} 
			\end{align}
		where we used the notation $\langle f,g\rangle_{\mu} = \int_0^t f(x)g(x) \mu(dx)$. These results are in the spirit of \cite{CC98,HS15}.
\end{remark}
When $\mu$ has finite support, \eqref{E:Ygamma2} is a  finite dimensional diffusion with an affine structure in the sense of \cite{dfs}. This underlying structure carries over to the case of infinite support and is the reason behind the tractability of the {Volterra Heston model}. 

\begin{remark}[Affine structure of $(\log S,V)$ in terms of $U$]  Let the {notations} and assumptions of Remark \ref{R:repV} be in force.
Relying  on the existence and uniqueness of the Riccati-Volterra equation \eqref{E:RicHeston1} one can establish the existence and uniqueness  of a differentiable (in time) solution $\chi_2$ to
 the following (possibly) infinite-dimensional system of Riccati ordinary differential equations
\begin{align}
\partial_t \chi_2(t, x) = - x \chi_2(t, x)  + F\left(\psi_1(t) ,   \langle  \chi_2(t,\cdot), 1 \rangle_{\mu}\right), \quad  \chi_2(0, x)=u_2,  \quad  x \in \supp \mu, \; t\geq 0,  \label{eq:Ric tilde psi }
\end{align}
such that {$\chi_2 (t, \cdot) \in L^{1}(\mu)$}, for all $t \geq 0$ and {$t \to \langle \chi_2(t, \cdot),1 \rangle_{\mu} \in L^2_{\rm loc}(\R_+)$} with $\psi_1$  given by \eqref{E:RicHeston0}   and $F$  by \eqref{E:phichi-2}. Moreover,  the unique global solution $\psi_2\in L^2_{\rm loc}(\R_+,\mathbb C^*)$ to the Riccati--Volterra equation \eqref{E:RicHeston1} admits the following representation 
\begin{align*}\label{eq:rep psi}
\psi_2 = \int_0^{\infty} \chi_2({\cdot}, x) \mu(dx),
\end{align*}	
where $\chi_2$ is the unique solution to \eqref{eq:Ric tilde psi }. In particular, combining the {equality} above with \eqref{E:charwithgt} and the representation of $(g_t)_{t \geq 0}$ in  \eqref{E:rep g}
leads to the exponentially-affine functional  
\begin{align*}
\E\left[ \exp\left( u X_T + (f*X)_T \right) \Mid \Fc_t \right] = \exp\left({\phi( t,T)  +  \psi_1(T-t) \log S_t  + \langle \chi_2(T-t, \cdot) , U_t\rangle_\mu }\right)
\end{align*}
 for all $t \leq T$ where  $\phi(t,T) =  (u_2 \Delta_{t} g_0 + F(\psi_1,\psi_2) * \Delta_{t}g_0)(T-t)$,
$(u,f)$  as in \eqref{E:condcoeffrealpart} and $U$ solves \eqref{E:Ygamma2}.
\end{remark}

{The representations of this section lead to a generic approximation of the Volterra Heston model by finite-dimensional affine diffusions, see \cite{AJEE18a} for the rigorous treatment of these approximations.}
\begin{appendices}

	\section{Existence results for stochastic Volterra equations}\label{A:sve}
	In this section, we  consider the following $d$-dimensional stochastic Volterra equation 
	\begin{equation} \label{generalDiffusion}
	X_t = g(t) + \int_0^t K(t-s) b(X_s) ds + \int_0^t K(t-s) \sigma(X_s) dW_s, 
	\end{equation}
	where $K \in {L}^2_{\rm loc} (\R, {\mathbb R}^{d \times d})$, $W$ is a $m$-dimensional Brownian motion, $g : \R^d \rightarrow \mathbb R^d$, $b: \mathbb R^d \rightarrow \mathbb R^d$, $\sigma : \mathbb R^d \rightarrow {\mathbb R}^{d \times m}$ are continuous {with linear growth.} 
By adapting the proofs of \cite[Appendix A]{ALP17} (there, {$g$ is constant}), we obtain the following existence results. Notice how the domain $\Gc_K$ defined in~\eqref{E:DomainG} enters in the construction of constrained solutions in Theorem~\ref{T:existence orthant} below.
	
	\begin{theorem}\label{T:existenceg}Under~\eqref{K_gamma}, assume that {$g \in \Hc^{\gamma/2}$.}
		\begin{enumerate}[(i)]
			\item \label{T:existenceg1}
			If $b$ and $\sigma$ are Lipschitz continuous, \eqref{generalDiffusion} admits a unique continuous strong solution $X$.
			\item \label{T:existenceg2}
		If $b$ and $\sigma$ are continuous {with linear growth} and $K$ admits a resolvent of the first kind $L$, then~\eqref{generalDiffusion} admits a continuous weak solution $X$.
		\end{enumerate}
		 {In both cases, $X$ is {locally} H\"older continuous  of any order strictly smaller than $\gamma/2$ and}  		
		 \begin{equation}\label{E:momentestimate2}
		 \sup_{t \leq T}\E [|X_t|^p] < \infty ,  \quad  p  > 0,  \quad T >0.
		 \end{equation}
\end{theorem}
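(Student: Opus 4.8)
The plan is to follow the strategy of \cite[Appendix~A]{ALP17}, the only genuinely new feature being that the deterministic input $g$ is now a function in $\Hc^{\gamma/2}$ rather than a constant; I will check that this merely contributes extra increment terms that are controlled by the H\"older regularity of $g$ and do not affect the core estimates.

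For part~\eqref{T:existenceg1} I would run a Picard iteration: set $X^0\equiv g$ and
\begin{equation*}
X^{n+1}_t = g(t) + \int_0^t K(t-s)\, b(X^n_s)\, ds + \int_0^t K(t-s)\,\sigma(X^n_s)\, dW_s, \quad n\ge 0.
\end{equation*}
The first step is a uniform moment bound: using Jensen's inequality for the drift (with $K\in L^2_{\rm loc}$), the Burkholder--Davis--Gundy inequality for the stochastic term, and the linear growth of $b,\sigma$ (which follows from the Lipschitz hypothesis), a Volterra--Gr\"onwall argument with the $L^1_{\rm loc}$ kernel $K^2$ yields $\sup_n\sup_{t\le T}\E[|X^n_t|^p]<\infty$ for every $p>0$ and $T>0$. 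The second step is contraction: the Lipschitz property gives a convolution inequality of the form $u_n(t)\le C\,(K^2*u_{n-1})(t)$ for $u_n(t)=\E[|X^{n+1}_t-X^n_t|^p]$, and iterating against the resolvent of $C K^2$ (which exists since $K^2\in L^1_{\rm loc}$) shows that $(X^n)$ is Cauchy and converges to a solution. Uniqueness follows by applying the same convolution inequality to the difference of two solutions.

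The H\"older regularity and the moment estimate \eqref{E:momentestimate2} come out together. Decomposing the increment $X_{t+h}-X_t$ as $g(t+h)-g(t)$ plus the two convolution differences $\int_0^t(K(t+h-s)-K(t-s))\,dZ_s$ and $\int_t^{t+h}K(t+h-s)\,dZ_s$, with $dZ=b(X)\,ds+\sigma(X)\,dW$, the Burkholder--Davis--Gundy and Jensen inequalities turn the $L^p$ norms of these two terms into the kernel quantities $\int_0^T(K(t+h)-K(t))^2\,dt$ and $\int_0^h K(t)^2\,dt$, both $O(h^\gamma)$ by \eqref{K_gamma}, while $g(t+h)-g(t)=O(h^{\gamma/2-\eps})$ since $g\in\Hc^{\gamma/2}$. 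Hence $\E[|X_{t+h}-X_t|^p]\le C\,h^{p(\gamma/2-\eps)}$, and Kolmogorov's continuity criterion delivers local H\"older continuity of any order strictly below $\gamma/2$.

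For part~\eqref{T:existenceg2}, where only linear growth is assumed, I would mollify $b,\sigma$ into Lipschitz coefficients $b^n,\sigma^n$ of uniform linear growth and let $X^n$ be the strong solution furnished by part~\eqref{T:existenceg1}. The uniform moment and H\"older bounds above are stable under this approximation, so by the Kolmogorov--Chentsov tightness criterion the laws of $(X^n,W)$ are tight in $C(\R_+,\R^d)\times C(\R_+,\R^m)$; passing to a subsequence and applying the Skorokhod representation theorem, I would realise the limit $(X,W)$ on a common space. The delicate point, and the main obstacle, is that weak convergence does not transmit directly through the stochastic integral. Following \cite{ALP17}, I would instead introduce $Z^n=\int_0^\cdot b^n(X^n_s)\,ds+\int_0^\cdot\sigma^n(X^n_s)\,dW_s$, pass to the limit in the pair $(X^n,Z^n)$, characterise the limiting $Z$ through a martingale problem with the correct drift and quadratic variation, and then use the \emph{resolvent of the first kind} $L$ to invert the convolution and recover $X=g+K*dZ$ in the limit. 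It is precisely this inversion, verifying that the Volterra convolution structure survives in the weak limit, that makes the existence of $L$ indispensable.
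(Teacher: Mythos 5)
Your proposal is correct and follows essentially the same route as the paper, which itself gives no detailed argument but simply states that the result is obtained ``by adapting the proofs of [ALP17, Appendix A]'' to a non-constant input curve $g\in\Hc^{\gamma/2}$ — precisely the Picard iteration, Kolmogorov/tightness, and resolvent-of-the-first-kind inversion scheme you describe, with the extra increment $g(t+h)-g(t)=O(h^{\gamma/2-\eps})$ absorbed into the H\"older estimate exactly as you indicate.
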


\begin{theorem} \label{T:existence orthant}
		Assume that $d=m=1$ and  that the scalar kernel $K$   satisfies \eqref{K_gamma}-\eqref{eq:K orthant}. Assume also that $b$ and $\sigma$ are continuous {with linear growth} such that $b(0)\ge0$ and $\sigma(0)=0$. Then \eqref{generalDiffusion} admits a nonnegative continuous weak solution for any $g  \in \Gc_{K}$.
		\end{theorem}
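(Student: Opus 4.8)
The plan is to combine the unconstrained existence theory with a boundary analysis governed by the admissibility set $\Gc_K$, following the constrained construction of \cite[Appendix A]{ALP17} with the single modification that the constant input used there is replaced by a general $g\in\Gc_K$. Since $d=m=1$ and $b,\sigma$ are continuous with linear growth, Theorem~\ref{T:existenceg}\eqref{T:existenceg2} already provides a continuous weak solution $X$ of \eqref{generalDiffusion} with locally H\"older paths of any order strictly smaller than $\gamma/2$ and the moment bound \eqref{E:momentestimate2}; the whole task is therefore to produce \emph{one} such solution that is in addition nonnegative, all regularity statements being inherited from Theorem~\ref{T:existenceg}.

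The first step is to record the identity that ties $\Gc_K$ to the boundary behaviour of $X$. Writing $Z=\int_0^\cdot\big(b(X_s)\,ds+\sigma(X_s)\,dW_s\big)$ so that $X=g+K*dZ$, splitting $K*dZ$ at time $t$, and using Lemma~\ref{thelemma} in the form $\Delta_hK=(\Delta_hK*L)(0)K+d(\Delta_hK*L)*K$, one gets
\begin{equation*}
X_{t+h}=G_h(t)+(\Delta_hK*L)(0)\,X_t+\big(d(\Delta_hK*L)*X\big)_t+\int_0^h K(h-u)\,dZ_{t+u},
\end{equation*}
with $G_h(t)=\Delta_hg(t)-(\Delta_hK*L)(0)g(t)-\big(d(\Delta_hK*L)*g\big)(t)\ge0$ exactly because $g$ satisfies \eqref{Croissance}. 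Under \eqref{eq:K orthant}, Remark~\ref{DeltaK} ensures $(\Delta_hK*L)(0)\ge0$ and that $d(\Delta_hK*L)$ is a nonnegative measure, so once $X\ge0$ on $[0,t]$ the first three terms are nonnegative and only the increment $\int_0^h K(h-u)\,dZ_{t+u}$ has indefinite sign. This is the rigorous counterpart of the semimartingale heuristic \eqref{E:modifieddiffusion}--\eqref{E:K'dz}: at a first visit to the boundary the full memory-dependent drift points inward thanks to \eqref{Croissance} and $b(0)\ge0$, while the noise degenerates because $\sigma(0)=0$. Condition \eqref{Croissance} is precisely the generalization of ``$g$ constant and nonnegative'' that makes this inward-drift property hold, which is all ALP17 ever used about the input.

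To make the construction rigorous I would import the ALP17 approximation scheme. One replaces $b,\sigma$ by Lipschitz coefficients $b_n,\sigma_n$ preserving $b_n(0)\ge0$, $\sigma_n(0)=0$ and the linear-growth bound---so Theorem~\ref{T:existenceg}\eqref{T:existenceg1} yields unique strong solutions---and regularizes $K$ and $g$ into a smooth kernel $K^m$ and a smooth input $g^m\in\Gc_{K^m}$, for which the solution $X^{n,m}$ is a genuine semimartingale. For such $X^{n,m}$ the invariance of $\R_+$ is classical: the drift at $\{X^{n,m}=0\}$ is nonnegative (this is the smooth-kernel form of $G_h\ge0$) and the diffusion vanishes there ($\sigma_n(0)=0$), hence $X^{n,m}\ge0$. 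Sending $m\to\infty$ with $n$ fixed, the contraction estimates of the Lipschitz theory give $X^{n,m}\to X^n$ and thus $X^n\ge0$; sending $n\to\infty$, the moment and H\"older bounds of Theorem~\ref{T:existenceg} yield tightness of $(X^n)$ and, passing to the limit through the associated martingale problem exactly as in Theorem~\ref{T:existenceg}\eqref{T:existenceg2}, a nonnegative continuous weak solution of \eqref{generalDiffusion}.

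The main obstacle is precisely this nonnegativity, concentrated in the regularization step. Because $X$ is not a semimartingale in general, its nonnegativity has to be transported through the approximation rather than read off an It\^o argument, and the delicate point is to choose $K^m$ and $g^m$ so that they keep the positivity structure of \eqref{eq:K orthant}---nonnegativity and monotonicity of both $K$ and its resolvent $L$---\emph{and} keep $g^m$ admissible, i.e.\ keep the deterministic boundary contribution $G_h$ nonnegative, uniformly along the scheme. Only under such a simultaneous preservation does the semimartingale invariance hold uniformly in $(n,m)$ and survive both limits; controlling the stochastic increment $\int_0^h K(h-u)\,dZ_{t+u}$ near the boundary, where it is tempered by $\sigma(0)=0$, is where the nonnegativity and non-increase of $L$ furnished by \eqref{eq:K orthant} are genuinely needed, just as in the proof of Theorem~\ref{T:condLaw0}.
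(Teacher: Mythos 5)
Your first half is exactly the paper's key computation: the decomposition
\begin{equation*}
X_{t+h}=G_h(t)+(\Delta_hK*L)(0)\,X_t+\big(d(\Delta_hK*L)*X\big)_t+\int_0^h K(h-u)\,dZ_{t+u},
\end{equation*}
with $G_h\ge0$ by \eqref{Croissance} and the other two deterministic-sign terms controlled by Lemma~\ref{thelemma} and Remark~\ref{DeltaK}, is precisely the inequality the paper establishes (at $t=\tau$, the first hitting time of $(-\infty,0)$). The gap is in how you close the argument. The paper does \emph{not} regularize $K$ or $g$ at all: it (i) solves the modified equation with coefficients $b(x^+),\sigma(x^+)$ via Theorem~\ref{T:existenceg}\eqref{T:existenceg2}, (ii) invokes the reduction of \cite[Theorem~3.5]{ALP17} to assume the \emph{strengthened} boundary condition that $x\le n^{-1}$ implies $b(x)\ge0$ and $\sigma(x)=0$ for some fixed $n$ (this is where the only approximation --- of the coefficients, not of the kernel --- takes place), and then (iii) runs a pathwise first-hitting-time contradiction directly on the non-semimartingale solution: on $\{\tau<\infty\}$ your identity gives $X_{\tau+h}\ge\int_0^h K(h-s)\big(b(X_{\tau+s})ds+\sigma(X_{\tau+s})dW_{\tau+s}\big)$, continuity gives $X_{\tau+s}\le n^{-1}$ on a small interval, so the stochastic integral vanishes and the drift term is nonnegative since $K\ge0$, forcing $X_{\tau+h}\ge0$ and contradicting the definition of $\tau$. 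No semimartingale property of $X$ is ever needed; the identity you already wrote down is the whole boundary argument.

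By contrast, the step you lean on --- mollifying $K$ into $K^m$ and $g$ into $g^m$ while simultaneously preserving \eqref{eq:K orthant} (nonnegativity and monotonicity of \emph{both} $K^m$ and its first-kind resolvent $L^m$) and keeping $g^m\in\Gc_{K^m}$ (a constraint that itself moves with $K^m$ through $\Delta_hK^m*L^m$) --- is not justified anywhere in your proposal, and you acknowledge it is the delicate point without resolving it. This is a genuine gap, not a routine verification: resolvents of the first kind behave badly under mollification, and there is no reason a smoothed $g$ remains admissible for a smoothed kernel. Moreover, even granting smoothness, the ``classical'' positivity of $X^{n,m}$ is not classical: the drift of the semimartingale $X^{n,m}$ at a zero contains the memory term $K'*dZ$ (cf.\ \eqref{E:modifieddiffusion}--\eqref{E:K'dz}), so you would in any case have to rerun the $\Gc_K$-type argument there. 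The fix is simply to drop the kernel/input regularization and use your displayed identity at $\tau$ together with the coefficient reduction of \cite[Theorem~3.5]{ALP17}, as the paper does.
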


	\begin{proof} Theorem~\ref{T:existenceg}\eqref{T:existenceg2} yields the existence of an unsconstrained continuous weak solution $X$ to the  following modified equation 
	$
	X_t = g(t) + \int_0^t K(t-s) b(X_s^+) ds + \int_0^t K(t-s) \sigma(X_s^+) dW_s.
	$
		As in the proof of  of~\cite[Theorem~3.5]{ALP17}, it suffices to prove the nonnegativity of $X$ under the stronger condition, that, for some fixed $n \in \mathbb N$, 
		\begin{equation} \label{eq:orth:bdry}
		\text{$x \leq n^{-1}$ implies $b(x)\ge0$ and $\sigma(x)=0$.}
		\end{equation}
	Set $Z=\int (b(X)dt + \sigma(X)dW)$ and $\tau=\inf\{t\ge0\colon X_t <0\}$. Since $g(0) \geq 0$, $\tau \geq 0$. On $\{\tau<\infty\}$,
	\begin{equation} \label{eq:orth:0}
	X_{\tau+h}=g(\tau + h) + (K*dZ)_{\tau+h} = g(\tau + h) + (\Delta_h K * dZ)_\tau + \int_0^h K(h-s) d Z_{\tau+s}, \quad  h \geq 0.
	\end{equation}
	Using Lemma~\ref{thelemma} and Remark~\ref{DeltaK} below, together with the fact that $X\geq 0$ on $[0,\tau]$,
	\begin{align*}
	g(\tau +h) + (\Delta_h K * dZ)_\tau &= g(\tau +h) + (\Delta_h K * L)(0)(X - g)(\tau) \\
	&\quad + (d(\Delta_h K * L) * X)_{\tau} - (d(\Delta_h K * L) *g) (\tau)\\
	&\geq g(\tau+h) - (d(\Delta_hK*L)*g)(\tau) - (\Delta_hK * L)(0) g(\tau),
	\end{align*}
	which is nonnegative. In view of \eqref{eq:orth:0} it follows that
	\begin{equation} \label{eq:orth:3}
	X_{\tau+h} \ge
	\int_0^h K(h-s) \left( b(X_{\tau+s})ds + \sigma(X_{\tau+s})dW_{\tau+s} \right)
	\end{equation}
	on $\{\tau<\infty\}$ for all $h\ge0$. Now, on $\{\tau<\infty\}$, $X_{\tau}=0$ and $X_{\tau+h}<0$ for arbitrarily small $h$. On the other hand, by continuity there is some $\varepsilon>0$  such that $X_{\tau+h}\le n^{-1}$ for all $h\in[0,\varepsilon)$. Thus \eqref{eq:orth:bdry} and \eqref{eq:orth:3} yield $X_{\tau+h} \ge0$ for all $h\in[0,\varepsilon)$. This shows that $\tau=\infty$,  ending the proof.
 	\end{proof}
		 
	\section{Reminder on stochastic convolutions and resolvents}\label{A:conv}
	For a measurable function $K$ on $\R_+$ and a measure $L$ on $\R_+$ of locally bounded variation, the convolutions $K*L$ and $L*K$ are defined by
	\vspace{-0.3cm}
	$$
	\vspace{-0.3cm}
	(K*L)(t) = \int_{[0,t]} K(t-s)L(ds), \qquad (L*K)(t) = \int_{[0,t]} L(ds)K(t-s)
	$$
	whenever these expressions are well-defined. If $F$ is a function on $\R_+$, we write $K*F=K*(Fdt)$.
	We can show that $L * F$ is almost everywhere well-defined and belongs to $ L^p_{\rm loc}(\R_+)$, whenever $F \in L^p_{\rm loc}(\R_+)$. Moreover, $(F * G) * L = F * (G * L) $ {\em a.e.}, whenever $F, G \in L^1_{loc}(\R_+)$, see~\cite[Theorem~3.6.1 and Corollary~3.6.2]{GLS90} for further details. \vs1 
	
	For any continuous semimartingale $M = \int _0^. b_s ds + \int_0^. a_s dB_s$  the convolution
	$ (K*dM)_t = \int_0^t K(t-s)dM_s$
	 is well-defined as an It\^o integral for every $t\ge0$ such that 
	$\int_0^t |K(t-s)| |b_s| ds + \int_0^t |K(t-s)|^2 |a_s|^2 ds <\infty.$
	By stochastic Fubini Theorem,  see \cite[Lemma~2.1]{ALP17}, we have {$(L*(K*dM)) = ((L * K) * dM), \,  a.s.$} whenever $K \in L^2_{loc}(\R_+,\R)$ and $a, b$ are locally bounded {\em a.s}.\vs1
	
	We define the {\em resolvent of the first kind} of a $d\times d$-matrix valued kernel $K$, as the $\R^{d\times d}$-valued measure $L$ on $\R_+$ of locally bounded variation such that
	$K*L = L*K \equiv \id,$
	where $\id$ stands for the identity matrix, see~\cite[Definition~5.5.1]{GLS90}. The {\em resolvent of the first kind} does not always exist. The following results are shown in~\cite[Lemma~2.6]{ALP17}.  
	
	\begin{lemma} \label{L:ZX}
		Let $K\in L^2_{\rm loc}(\R_+)$ and $Z=\int_0^. b_s ds + \int_0^. \sigma_s dW_s$ a continuous semimartingale with $b$ and $\sigma$ locally bounded. Assume that $X$ and $K*dZ$ are continuous processes and that $K$ admits a resolvent of the first kind $L$. Then 
		{$
		X = K*dZ$ if and only if $  L*X=Z.
		$}
	\end{lemma}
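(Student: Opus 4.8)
The plan is to exploit the defining property of the resolvent of the first kind, namely $K*L = L*K \equiv \id$, which in the scalar case at hand means that both $K*L$ and $L*K$ equal the constant function $1$, together with the two associativity statements already recalled in this appendix: the stochastic Fubini relation $L*(K*dM) = (L*K)*dM$ a.s.\ from \cite[Lemma~2.1]{ALP17}, valid since $K \in L^2_{\rm loc}(\R_+)$ and the integrands $b,\sigma$ of $Z$ are locally bounded, and the deterministic associativity of convolution from \cite[Theorem~3.6.1 and Corollary~3.6.2]{GLS90}. Throughout I use the elementary identities $(1*dZ)_t = \int_0^t dZ_s = Z_t$ and $(1*W)_t = \int_0^t W_s\,ds$ for a continuous process $W$, where $1$ denotes the constant function.

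For the implication $X = K*dZ \Rightarrow L*X = Z$, I would convolve the identity $X = K*dZ$ with $L$ and apply the stochastic Fubini theorem to obtain $L*X = L*(K*dZ) = (L*K)*dZ$. Since $L$ is the resolvent of the first kind of $K$ we have $L*K = \id$, i.e.\ the constant function $1$, whence $(L*K)*dZ = 1*dZ = Z$, using that $Z_0 = 0$. This yields $L*X = Z$.

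For the converse $L*X = Z \Rightarrow X = K*dZ$, I would set $Y := K*dZ$, which is a continuous process by hypothesis. The forward implication applied to $Y$ gives $L*Y = Z$, so by linearity $L*(X-Y) = Z - Z = 0$. It then remains to deduce $X = Y$ from $L*(X-Y) = 0$; in other words, to show that $W \mapsto L*W$ is injective on continuous processes. Fixing $\omega$ and regarding $W := X-Y$ as a continuous, hence locally bounded and $L^1_{\rm loc}$, path, I would convolve $L*W = 0$ with $K$ and use the deterministic associativity of convolution together with $K*L = \id = 1$ to get $0 = K*(L*W) = (K*L)*W = 1*W = \int_0^{\cdot} W_s\,ds$. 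Since $W$ is continuous, differentiating in time forces $W \equiv 0$, that is $X = K*dZ$.

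The main point to be careful about, and the only real obstacle, is the justification of the two rearrangements of convolutions. The forward direction rests on the stochastic Fubini theorem, whose hypotheses ($K \in L^2_{\rm loc}$ and $b,\sigma$ locally bounded) are exactly those assumed here; the backward direction rests on the purely pathwise associativity $K*(L*W) = (K*L)*W$, which is legitimate because $K \in L^2_{\rm loc} \subset L^1_{\rm loc}$, $W$ is continuous hence in $L^1_{\rm loc}$, and $L$ is a measure of locally bounded variation, so that \cite[Theorem~3.6.1]{GLS90} applies. One should also note that, since $K*L$ is the constant function $1$ rather than a Dirac mass, the cancellation produced by the resolvent is the primitive $1*W = \int_0^{\cdot} W_s\,ds$; it is the continuity of $X$ and of $K*dZ$ that upgrades $\int_0^{\cdot} W_s\,ds \equiv 0$ to $W \equiv 0$, which is precisely why continuity of both processes is assumed in the statement.
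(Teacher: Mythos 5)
Your proof is correct and takes essentially the same route as the paper, which states this lemma without proof by deferring to \cite[Lemma~2.6]{ALP17}: there, too, the forward direction is the stochastic Fubini identity $L*(K*dZ)=(L*K)*dZ=Z$, and the converse applies the forward direction to $Y=K*dZ$, convolves $L*(X-Y)=0$ with $K$ to get $\int_0^{\cdot}(X_s-Y_s)\,ds\equiv 0$, and concludes by continuity. Your added care about where associativity is pathwise (via \cite[Theorem~3.6.1]{GLS90}) versus stochastic (via \cite[Lemma~2.1]{ALP17}), and about why continuity of both $X$ and $K*dZ$ is needed, matches the intended argument exactly.
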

	
	\begin{lemma}\label{thelemma} Assume that $K \in L^1_{\rm loc}(\R_+)$ admits a resolvent of the first kind $L$. For any $F \in L^1_{loc}(\R_+)$ such that $ F * L $ is right-continuous and of locally bounded variation one has 
	\vspace{-0.1cm}
		$$ \vspace{-0.1cm}
		F = (F*L)(0) K+ d(F*L) * K . $$
	\end{lemma}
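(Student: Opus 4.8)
The plan is to recover $F$ from the single convolution $F*L$ by using the defining property of the resolvent of the first kind, $K*L = L*K \equiv \id$ (which in the present scalar setting is the constant function $1$), together with the injectivity of the map $G \mapsto G*L$ on $L^1_{\rm loc}(\R_+)$.

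First I would rewrite the right-hand side as a single convolution against a measure. Set $h := F*L$, which by assumption is right-continuous and of locally bounded variation, and introduce the locally finite measure
\[
\mu := h(0)\,\delta_0 + dh ,
\]
where $\delta_0$ is the unit mass at the origin. By the convention $h(t) = h(0) + \int_{[0,t]} dh(s)$ we have $\mu([0,t]) = h(t)$ for every $t\ge 0$, and since $\delta_0 * K = K$ the right-hand side of the asserted identity is precisely $\mu * K = h(0) K + dh * K$. Hence the claim is equivalent to $F = \mu*K$. Both sides lie in $L^1_{\rm loc}(\R_+)$: indeed $F\in L^1_{\rm loc}$ by hypothesis, while $\mu$ has locally bounded variation and $K\in L^1_{\rm loc}$, so $\mu*K\in L^1_{\rm loc}$.

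Next I would convolve $\mu*K$ with $L$ and collapse it using associativity. Identifying the functions $F$ and $K$ with the absolutely continuous measures $F\,dt$ and $K\,dt$, all objects become locally finite measures on $\R_+$, and the associativity of their convolution (as in the convolution calculus of \cite{GLS90}) gives
\[
(\mu * K)*L = \mu*(K*L) = \mu * 1 .
\]
Since $(\mu*1)(t) = \int_{[0,t]} \mu(ds) = \mu([0,t]) = h(t)$, this shows $(\mu*K)*L = h = F*L$, i.e.\ $(F - \mu*K)*L = 0$.

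It remains to conclude $F=\mu*K$ from $(F-\mu*K)*L=0$, which amounts to the injectivity of convolution against $L$ on $L^1_{\rm loc}(\R_+)$. For this I would convolve with $K$ on the right and use $L*K\equiv 1$: for any $G\in L^1_{\rm loc}$ with $G*L=0$, associativity yields $G*1 = (G*L)*K = 0$, so that $\int_0^t G(s)\,ds = (G*1)(t) = 0$ for all $t$ and therefore $G=0$ almost everywhere. Taking $G = F-\mu*K$ completes the argument. The only real difficulty is technical: ensuring that every convolution above is well defined in $L^1_{\rm loc}$ and justifying the associativity of the mixed function/measure convolutions; both follow from the identification of $F,K$ with their induced measures and the standard associativity of convolution of locally finite measures on $\R_+$, after which the cancellation step uses nothing beyond $L*K\equiv 1$ and the fact that a locally integrable function with vanishing primitive vanishes a.e.
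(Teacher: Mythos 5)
Your argument is correct and is essentially the standard one: the paper states this lemma without printing a proof, deferring to \cite[Lemma~2.6]{ALP17}, where the identity is obtained by exactly your route --- convolve the candidate right-hand side with $L$, use $K*L=L*K\equiv 1$ and associativity to recover $F*L$, then cancel $L$ by convolving with $K$ and invoking that a locally integrable function with vanishing primitive is zero a.e. Your measure $\mu=(F*L)(0)\delta_0+d(F*L)$ is just a convenient repackaging of that computation, and the injectivity step is justified the same way as in the reference.
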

	\begin{remark} \label{DeltaK}
		The previous lemma will be used with $F = \Delta_h K$, for a fixed $h \geq 0$. If $K$ is continuous on $(0, \infty)$, then $\Delta_h K*L$ is right-continuous. Moreover, if $K$ is  nonnegative and  $L$ is non-increasing  in the sense that  $s \to  L([s,s+t])$ is non-increasing for all $t\ge0$,  then $\Delta_h K *L$  is non-decreasing  {since $ \Delta_h K * L = 1 -\int_{(0,h]} K(h-s) L(\cdot + ds),  \,  t \geq 0. $}
			\end{remark}

 \end{appendices}
 
 \bibliographystyle{abbrv}
\setlength{\bibsep}{0pt plus 0.3ex}
  {\footnotesize\bibliography{bibAJEEMarkovianshort}}
 
\end{document}